\documentclass[11pt, reqno]{amsart}

\usepackage{amsmath, amsthm, amscd, amsfonts, amssymb, graphicx, color, mathtools, mathrsfs}
\usepackage[bookmarksnumbered, colorlinks, plainpages]{hyperref}
\usepackage{enumerate}
\usepackage{setspace}
\usepackage{multicol}
\usepackage[margin=1in]{geometry}
\usepackage{comment}
\usepackage{booktabs}
\usepackage{caption}
\usepackage{subcaption}
\usepackage{dsfont}
\usepackage{tikz-cd}
\usepackage{algorithmic}
\usepackage{pgfplots}

\theoremstyle{definition}
\newtheorem{theorem}{Theorem}[section]

\newtheorem{proposition}[theorem]{Proposition}
\newtheorem{algorithm}[theorem]{Algorithm}

\numberwithin{equation}{section}

\newcommand{\curl}{\mathrm{curl}\,}

\newcommand{\divg}{\mathrm{div}\,}
\newcommand{\bff}{\boldsymbol}
\newcommand{\bb}{\mathbb}

\newcommand{\dtt}{\mathrm{d}_{\tau}}

\newcommand{\ds}{\mathrm{d}s}

\newcommand{\pa}{\partial}

\newcommand{\norm}[2]{\left\|{#1}\right\|_{#2}}
\newcommand{\inpro}[2]{\left\langle#1,#2\right\rangle}

\newcommand{\abs}[1]{\left|{#1}\right|}

\newcommand{\hdiv}{\mathbb{H}(\mathrm{div})}
\newcommand{\hcurl}{\mathbb{H}(\mathrm{curl})}
\newcommand{\hzerodiv}{\mathbb{H}_0(\mathrm{div})}
\newcommand{\hzerocurl}{\mathbb{H}_0(\mathrm{curl})}
\newcommand{\curlh}{\mathrm{curl}_h\,}

\newcommand{\thetau}{\bff{\theta}_{\bff{u}}}
\newcommand{\rhou}{\bff{\rho}_{\bff{u}}}
\newcommand{\thetap}{\theta_{p}}
\newcommand{\rhop}{\rho_{p}}
\newcommand{\thetab}{\bff{\theta}_{\bff{B}}}
\newcommand{\rhob}{\bff{\rho}_{\bff{B}}}
\newcommand{\thetaj}{\bff{\theta}_{\bff{J}}}
\newcommand{\rhoj}{\bff{\rho}_{\bff{J}}}
\newcommand{\thetae}{\bff{\theta}_{\bff{E}}}
\newcommand{\rhoe}{\bff{\rho}_{\bff{E}}}

\allowdisplaybreaks

\begin{document}
	\setcounter{page}{1}
	
	\title[Error analysis of a div-preserving mixed FEM for the Hall--MHD equations]
	{Error analysis of a divergence-preserving mixed finite element scheme for the incompressible Hall--magnetohydrodynamic equations}
	
	\author[Beniamin Goldys]{Beniamin Goldys}
	\address{School of Mathematics and Statistics, The University of Sydney, Sydney 2006, Australia}
	\email{\textcolor[rgb]{0.00,0.00,0.84}{beniamin.goldys@sydney.edu.au}}
	
	\author[Agus L. Soenjaya]{Agus L. Soenjaya}
	\address{School of Mathematics and Statistics, The University of New South Wales, Sydney 2052, Australia}
	\email{\textcolor[rgb]{0.00,0.00,0.84}{a.soenjaya@unsw.edu.au}}
	
	\author[Thanh Tran]{Thanh Tran}
	\address{School of Mathematics and Statistics, The University of New South Wales, Sydney 2052, Australia}
	\email{\textcolor[rgb]{0.00,0.00,0.84}{thanh.tran@unsw.edu.au}}

	\keywords{Hall--MHD, Voigt regularisation, plasmas, reconnection, divergence-preserving, energy-stable, mixed finite element, error estimates}
	\subjclass{65M12, 65M60, 76M10, 76W05}
		
	\date{May 1, 2026}
	
	\begin{abstract}
	The incompressible Hall-magnetohydrodynamics (Hall--MHD) system presents substantial analytical and computational challenges due to its stiff, highly nonlinear Hall term and the strict requirement that the magnetic field remains solenoidal. In this paper, we study a Voigt-regularised Hall--MHD system, which is of independent analytical interest and provides a physically consistent, well-posed regularisation of the original model.
	We propose, analyse, and implement a structure-preserving, linear, fully discrete finite element method for this regularised problem. Using finite element exterior calculus and a mixed formulation, the spatial discretisation enforces the divergence-free condition on the magnetic field exactly, while a skew-symmetric, linearly implicit time discretisation yields unconditional energy stability. We establish optimal convergence rates for the Voigt-regularised problem and, additionally, derive error estimates for the unregularised Hall--MHD system, with the Voigt regularisation playing a crucial role in the non-resistive regime. Finally, numerical simulations in both 2.5D and 3D corroborate the theoretical results and demonstrate the physical fidelity of the scheme.
	\end{abstract}
	\maketitle

\section{Introduction}
The study of magnetohydrodynamics (MHD) is fundamental to understanding the dynamics of electrically conducting fluids, such as plasmas, liquid metals, and astrophysical flows. Classical MHD models describe the interaction between fluid motion and magnetic fields under a single-fluid approximation, but they fail to capture phenomena at small scales where electron and ion motions decouple. 

The Hall--MHD equations extend classical MHD by incorporating the Hall effect, which becomes significant at length scales comparable to the ion skin depth~\cite{Lig60}. This introduces a dispersive, nonlinear term in the induction equation, leading to richer dynamics, including whistler waves and fast magnetic reconnection corresponding to physically observable phenomena in astrophysics~\cite{Bis96, DraShaSwi08, For91, MorDasGom05}. The Hall term poses significant analytical and numerical challenges due to its strong nonlinearity.

These difficulties motivate the introduction of regularised models that retain the essential physical structure but improve stability and well-posedness.
Among various approaches, the Voigt-type regularisation provides a physically consistent and mathematically convenient modification~\cite{ConPas23, KurLarReb12}. It can be interpreted as a simplified representation of electron inertia and finite Larmor radius effects in the underlying two-fluid plasma description~\cite{HeMao24, HuaHewBroBha25, Hub03}. These mechanisms introduce corrections that smooth the dynamics at spatial scales comparable to the electron inertial length. 

We now introduce the problems considered in this paper.
Let $\mathscr{D}\subset \bb{R}^d$ $(d=2,3)$ be a contractible bounded domain with polytopal boundary. We consider the incompressible resistive Hall--MHD system~\cite{Bis93, Hub03, LaaHuFar23}, which is a coupled system of Navier--Stokes and Maxwell equations, together with a generalised Ohm's law. The problem consists of the velocity field $\bff{u}: (0,T)\times \mathscr{D}\to \bb{R}^3$, the pressure $p: (0,T)\times \mathscr{D}\to \bb{R}$, and the magnetic field $\bff{B}: (0,T)\times \mathscr{D}\to \bb{R}^3$ governed by:
\[
\begin{aligned}
	&\rho \pa_t \bff{u} - \nu \Delta \bff{u} + \rho (\bff{u}\cdot\nabla)\bff{u}
	+ \nabla p - \curl \bff{B}\times \bff{B}
	= \bff{f}
	\quad && \text{in } (0,T)\times \mathscr{D},
	\\[1ex]
	&\pa_t \bff{B}+ \sigma \,\curl (\curl \bff{B}) - \curl (\bff{u}\times \bff{B})
	+ \eta \, \curl (\curl \bff{B}\times \bff{B})
	= \bff{0}
	\quad && \text{in } (0,T)\times \mathscr{D},
	\\[1ex]
	&\divg\bff{u}= \divg\bff{B}= 0
	\quad && \text{in } (0,T)\times \mathscr{D},
	\\[1ex]
	&\bff{u}(0,\bff{x})= \bff{u}_0(\bff{x}), \quad
	\bff{B}(0,\bff{x})= \bff{B}_0(\bff{x})
	\quad && \text{in } \mathscr{D},
	\\[1ex]
	&\bff{u}=\bff{0}, \quad \bff{B}\cdot \bff{n}=0, \quad
	\curl \bff{B}\times \bff{n}=\bff{0}, \quad
	(\curl \bff{B}\times \bff{B})\times \bff{n}=\bff{0}
	\quad && \text{on } (0,T)\times \partial\mathscr{D},
\end{aligned}
\]
where $\bff{n}$ is the outward pointing normal vector on $\pa \mathscr{D}$, and the initial data satisfy~$\divg \bff{u}_0=\divg \bff{B}_0=0$. This system is the form of the Hall--MHD equations in bounded domains most commonly studied in the literature.

The positive coefficients $\rho$, $\nu$, $\sigma$, and $\eta$ represent the fluid density, fluid kinematic viscosity, magnetic resistivity, and the strength of the Hall effect, respectively. For clarity of exposition, subsequently we set $\rho=1$, $\bff{f}=\bff{0}$, and assume $d=3$. The case $d=2$ corresponds to the 2.5D Hall--MHD, where the system is defined on a two-dimensional domain but retains all three vector components under the assumption of vanishing partial derivatives of the unknowns in the $z$-component~\cite{DonSer12, LaaHuFar23, MajBer02}. We provide a brief discussion of this reduction in Section~\ref{sec:25d}.

To design a structure-preserving numerical method, we will rewrite the problem including the underlying physical quantities, namely the electric field $\bff{E}: (0,T)\times \mathscr{D}\to \bb{R}^3$ and the current density $\bff{J}: (0,T)\times \mathscr{D}\to \bb{R}^3$. The Hall--MHD system is 
then rewritten in the following mixed formulation:
\begin{subequations}\label{equ:mhd}
	\begin{alignat}{2}
		&\pa_t \bff{u} - \nu \Delta \bff{u} + (\bff{u}\cdot\nabla)\bff{u}  + \nabla p - \bff{J}\times \bff{B}
		= 
		\bff{0}
		\; && \quad\text{in } (0,T)\times \mathscr{D},
		\label{equ:mhd ns}
		\\[1ex]
		&\pa_t \bff{B}+ \curl \bff{E} = \bff{0}
		\; && \quad\text{in } (0,T)\times \mathscr{D},
		\label{equ:mhd induction}
		\\[1ex]
		&\sigma\bff{J}+\eta \bff{J}\times \bff{B} = \bff{E}+\bff{u}\times \bff{B}
		\; && \quad\text{in } (0,T)\times \mathscr{D},
		\label{equ:mhd ohm}
		\\[1ex]
		&\bff{J}- \curl \bff{B} = \bff{0}
		\; && \quad\text{in } (0,T)\times \mathscr{D},
		\label{equ:mhd ampere}
		\\[1ex]
		&\divg\bff{u}= \divg\bff{B}= 0
		\; && \quad\text{in } (0,T)\times \mathscr{D},
		\label{equ:mhd div}
		\\[1ex]
		&\bff{u}(0,\bff{x})= \bff{u}_0(\bff{x}), \quad
		\bff{B}(0,\bff{x})= \bff{B}_0(\bff{x})
		\; && \quad\text{in } \mathscr{D},
		\label{equ:mhd init}
		\\[1ex]
		&\bff{u}=\bff{0}, \quad \bff{B}\cdot \bff{n}=0, \quad \bff{E} \times \bff{n}=\bff{0}, \quad \bff{J}\times \bff{n}=\bff{0}
		\; && \quad\text{on } (0,T)\times \partial \mathscr{D}.
		\label{equ:mhd boundary}
	\end{alignat}
\end{subequations}
In this formulation, \eqref{equ:mhd ns} denotes the momentum equation, while \eqref{equ:mhd induction} and \eqref{equ:mhd ohm} represent to the induction equation and generalised Ohm's law, respectively. Equation~\eqref{equ:mhd ampere} is the Amp\`ere law. The divergence-free constraints in \eqref{equ:mhd div} express two physical principles: incompressibility of the fluid yields $\divg \bff{u}=0$, whereas $\divg \bff{B}=0$ reflects the absence of magnetic monopoles (Gauss' law). The boundary conditions in \eqref{equ:mhd boundary} correspond to the perfectly conducting boundaries considered in \cite{LaaHuFar23}.

As discussed previously, the Hall--MHD system exhibits strong nonlinear and dispersive behaviour due to the term~$\eta \bff{J}\times \bff{B}$ in the generalised Ohm's law~\eqref{equ:mhd ohm}, leading to steep gradients and high-frequency oscillations caused by the whistler waves that make numerical simulation and analysis difficult~\cite{ArnDreGra08, GuoMeiYan24, Hub03, Iwa25}. In particular, resolving the small scales induced by the Hall term often requires extremely fine meshes or small time steps, and the global existence of strong solutions in three dimensions remains an open problem. 

These difficulties motivate the introduction of Voigt-type regularisation that retains the essential physical structure, but improve stability and well-posedness~\cite{ConPas23, KurLarReb12, LarTit14}.
In the Voigt model, this is achieved by replacing the time derivative with its spatially filtered version $(I-\alpha \Delta) \partial_t$, where $\alpha>0$ denotes the regularisation length scale and $I$ is the identity operator. This modification preserves the divergence-free and energy structures of the original Hall--MHD system, while improving numerical stability and permitting coarser spatial discretisations. Beyond its numerical advantages, the Voigt regularisation is also of independent analytical interest, as it yields globally well-posed models with enhanced regularity while remaining asymptotically consistent with the original system as $\alpha\to 0$; see, e.g.,~\cite{BrzLarSaf25, ConPas23, LarTit14}.

Motivated by these considerations, we consider a Voigt-regularised Hall--MHD system~\cite{HuaHewBroBha25}, obtained by applying a spatial filter of the form $(I-\alpha_1 \Delta) \partial_t$ to the time derivative in the momentum equation \eqref{equ:mhd ns} and retaining the electron inertia term $\alpha_2 \partial_t \bff{J}$ on the right-hand side of \eqref{equ:mhd ohm}. Despite this regularisation, the equations remain strongly nonlinear due to the convective, Lorentz, and Hall coupling terms, which continue to govern the complex interaction between velocity and magnetic fields. Under this reformulation, the Voigt-regularised Hall--MHD system takes the form:
\begin{subequations}\label{equ:reg mhd}
	\begin{alignat}{2}
		&(I-\alpha_1\Delta)\pa_t \bff{u} - \nu \Delta \bff{u} + (\bff{u}\cdot\nabla)\bff{u}  + \nabla p - \bff{J}\times \bff{B}
		= 
		\bff{0}
		\; && \quad\text{in } (0,T)\times \mathscr{D},
		\label{equ:reg mhd ns}
		\\[1ex]
		&\pa_t \bff{B}+  \curl \bff{E} = \bff{0}
		\; && \quad\text{in } (0,T)\times \mathscr{D},
		\label{equ:reg mhd induction}
		\\[1ex]
		&\alpha_2 \pa_t \bff{J} + \sigma\bff{J} + \eta \bff{J}\times \bff{B} =\bff{E} + \bff{u}\times \bff{B}
		\; && \quad\text{in } (0,T)\times \mathscr{D},
		\label{equ:reg mhd electric}
		\\[1ex]
		&\bff{J}- \curl \bff{B} = \bff{0}
		\; && \quad\text{in } (0,T)\times \mathscr{D},
		\label{equ:reg mhd ampere}
		\\[1ex]
		&\divg\bff{u}= \divg\bff{B}= 0
		\; && \quad\text{in } (0,T)\times \mathscr{D},
		\label{equ:reg mhd div}
		\\[1ex]
		&\bff{u}(0,\bff{x})= \bff{u}_0(\bff{x}), \quad
		\bff{B}(0,\bff{x})= \bff{B}_0(\bff{x})
		\; && \quad\text{in }  \mathscr{D},
		\label{equ:reg mhd init}
		\\[1ex]
		&\bff{u}=\bff{0}, \quad \bff{B}\cdot \bff{n}=0, \quad \bff{E}\times \bff{n}=\bff{0}, \quad \bff{J} \times \bff{n}=\bff{0}
		\; && \quad\text{on } (0,T)\times \partial \mathscr{D},
		\label{equ:reg mhd boundary}
	\end{alignat}
\end{subequations}
For simplicity, periodic boundary conditions are frequently employed in practice, and our analysis remains valid in this setting. We remark that Voigt-type regularisation has been investigated in related contexts, including the Euler, Navier--Stokes, and standard MHD equations~\cite{BrzLarSaf25, KurLarReb12, LarTit14, YanHuaHe24}.

Various numerical algorithms for solving the Hall--MHD problem have recently been proposed. For the \emph{stationary} resistive Hall--MHD system, a structure-preserving finite element method (FEM) and an efficient preconditioner are developed in~\cite{LaaHuFar23}. For the time-dependent problem, several works have employed scalar auxiliary variable (SAV) approaches. Specifically, \cite{GuoMeiYan24} proposes a Legendre--Galerkin spectral method for bounded domains, while \cite{GuoYanWei25} develops a Hermite--Galerkin spectral method to handle variable density flows on unbounded domains. Concurrently, \cite{WeiMei26} combines a BDF2-type integrator with a nonlocal SAV-FEM approach, and a second-order pressure projection for the fluid. From a purely solver-oriented perspective, \cite{Cha25} utilises a multigrid Newton--Krylov method to efficiently advance the implicit time stepping. 

However, the aforementioned SAV-based methods share notable mathematical limitations: they dissipate a modified energy functional rather than the true physical energy, and they may fail to enforce the divergence-free condition exactly at the discrete level. Furthermore, the spectral methods in~\cite{GuoMeiYan24, GuoYanWei25} are practically restricted to canonical geometries (e.g., idealised boxes or infinite spaces), lacking the geometric flexibility required to model complex physical boundaries. Most importantly, rigorous error analysis remains unavailable for all of the above schemes.

Preserving the divergence-free condition at the discrete level is essential for both stability and physical fidelity, as violations can induce nonphysical magnetic monopoles and trigger severe numerical instabilities~\cite{BraBar80, DaiWoo98}. To address these challenges, we develop and rigorously analyse a structure-preserving finite element scheme for the Voigt-regularised Hall--MHD system~\eqref{equ:reg mhd}, which serves as a mathematically well-posed and physically consistent approximation of the original equations~\eqref{equ:mhd}. The proposed method enforces the solenoidal constraint exactly through finite element exterior calculus, and satisfies a discrete energy law by means of a skew-symmetric, semi-implicit time discretisation. We establish optimal convergence rates for the Voigt-regularised problem under suitable regularity assumptions. In addition, the analysis extends to the unregularised Hall--MHD system, for which convergence can still be shown, although the Voigt regularisation plays a crucial role in the non-resistive regime ($\sigma=0$ in \eqref{equ:reg mhd electric}).

The remainder of this paper is organised as follows. Section 2 introduces the functional setting, notation, and necessary preliminary results. In Section 3, we establish the stability of the proposed scheme and carry out a rigorous error analysis, culminating in the proof of the main convergence theorem (Theorem~\ref{the:main}). Finally, Section~\ref{sec:num sim} presents a series of physically relevant numerical experiments, in both 2.5D and fully 3D cases, to validate the theoretical convergence rates and demonstrate the robustness of the structure-preserving framework.

\section{Preliminaries}

\subsection{Notations}
We begin by defining some notations used in this paper. Let $\mathscr{D}\subset \bb{R}^d$, $d\in\{2,3\}$, be a bounded domain with polytopal boundary. For $p\in [1,\infty]$, the function space $\bb{L}^p := \bb{L}^p(\mathscr{D}; \bb{R}^d)$ denotes the space of $p$-th integrable functions on~$\mathscr{D}$ taking values in $\bb{R}^d$, and $\bb{W}^{s,p} := \bb{W}^{s,p}(\mathscr{D}; \bb{R}^d)$ denotes the Sobolev space of functions on $\mathscr{D}$ taking values in $\bb{R}^d$. We
write $\bb{H}^s := \bb{W}^{s,2}$ and set $\bb{W}^{0,p}:=\bb{L}^p$. We denote by $L^2_0$ the subspace of scalar-valued functions in $L^2(\mathscr{D})$ with zero average. The space $\mathcal{C}^0(\mathscr{D};\bb{R}^d)$ denotes the space of continuous functions on $\mathscr{D}$ taking values in $\bb{R}^d$, and we write $C^0(\mathscr{D}):= \mathcal{C}^0(\mathscr{D};\bb{R})$.

We shall introduce the following function spaces:
\begin{align*}
	\bb{H}(\mathrm{div}) &:= \{\bff{v}\in \bb{L}^2: \divg \bff{v}\in L^2\},
	\\
	\bb{H}(\mathrm{curl}) &:= \{\bff{v}\in \bb{L}^2: \curl \bff{v}\in \bb{L}^2\}.
\end{align*}
We further introduce the spaces $\bb{H}^1_0$, $\hzerodiv$, and $\hzerocurl$ as the subspaces of functions in $\bb{H}^1$, $\hdiv$, and $\hcurl$ with zero standard, normal, and tangential traces in $\bb{H}^{\frac12}(\partial\mathscr{D})$, $\bb{H}^{-\frac12}(\partial\mathscr{D})$, and $\bb{H}^{-\frac12}(\partial\mathscr{D})$, respectively.

If $X$ is a Banach space, $L^p(0,T; X)$ and $W^{k,p}(0,T;X)$ denote respectively the usual Lebesgue and Sobolev spaces of strongly measurable functions on $(0,T)$ taking values in $X$. The space $C^0([0,T];X)$ denotes the space of continuous functions on $[0,T]$ taking values in $X$. For brevity, we will denote the spaces $L^p(0,T;X)$, $W^{k,p}(0,T;X)$, and $C^0([0,T];X)$ by $L^p_T(X)$, $W^{k,p}_T(X)$, and $C^0_T(X)$, respectively. 

Throughout this paper, we denote the scalar product in a Hilbert space $H$ by $\inpro{\cdot}{\cdot}_H$ and its corresponding norm by $\|\cdot\|_H$. We will not distinguish between the scalar product of $\bb{L}^2$ vector-valued functions taking values in $\bb{R}^3$ and the scalar product of $\bb{L}^2$ matrix-valued functions taking values in $\bb{R}^{3\times 3}$, and denote them by $\langle\cdot,\cdot\rangle$.

Finally, the constant $C$ in the estimate denotes a generic constant, which may take different values in different occurrences. If its dependence on a particular variable, e.g., $S$, needs to be emphasised, we write $C_S$.

\subsection{Finite element spaces}

We now introduce the finite element spaces employed in the numerical approximation of problem~\eqref{equ:reg mhd}. Let $\{\mathcal{T}_h\}_{h>0}$ be a family of quasi-uniform triangulations of $\mathscr{D}\subset \bb{R}^d$ into simplicial elements with maximal mesh-size $h$. Let $\bb{P}_k$ and $P_k$ denote, respectively, the space of vector-valued and scalar-valued polynomials of degree at most $k$. Let $(\bb{V}_h, Q_h)$ denote the MINI finite element pair, i.e. the lowest-order inf-sup stable conforming pair for incompressible flow~\cite{Vol16}. More precisely, for any $K\in\mathcal{T}_h$, let $\{\lambda_i^K\}_{i=1}^{d+1}$ be the barycentric coordinates associated with $K$. We construct the bubble function on $K$ by $\bff{b}_K:= \prod_{i=1}^{d+1} \lambda_i^K\in \bb{P}_{d+1}(K)$, and define
\begin{align}\label{equ:Vh vec}
	&\bb{V}_{h}:= \{\bff{v}_h \in \mathcal{C}^0(\overline{\mathscr{D}}; \bb{R}^d): \bff{v}_h|_K \in \bb{P}_1(K) \oplus \mathrm{span}\{\bff{b}_K\}, \; \forall K \in \mathcal{T}_h\} \subset \bb{H}^1,
	\\
	\label{equ:Qh}
	&Q_h := \{q_h\in C^0(\overline{\mathscr{D}}): q_h|_K\in P_1(K), \; \forall K \in \mathcal{T}_h\} \subset H^1.
\end{align}
We denote by $\bb{X}_h$, $\bb{RT}_h$, and $DG_h$ the lowest-order N\'ed\'elec space, the lowest-order Raviart--Thomas finite element space, and the space of scalar-valued piecewise constant functions, respectively. More precisely,
\begin{align}
	\label{equ:Xh}
	\bb{X}_h &:= \{\bff{v}_h \in \bb{L}^2: \bff{v}_h|_K \in \bb{P}_0(K) \oplus \bff{x}\times \bb{P}_0(K),\; \forall K\in \mathcal{T}_h\} \subset \hcurl,
	\\
	\label{equ:RTh}
	\bb{RT}_h &:= \{\bff{v}_h \in \bb{L}^2: \bff{v}_h|_K \in \bb{P}_0(K) \oplus \bff{x} P_0(K),\; \forall K\in \mathcal{T}_h\} \subset \hdiv,
	\\
	\label{equ:DGh}
	DG_h &:= \{q_h\in L^2(\mathscr{D}): q_h|_K\in P_0(K), \; \forall K \in \mathcal{T}_h\} \subset L^2.
\end{align}
The corresponding finite element spaces with zero (standard, tangential, or normal) traces are $\bb{V}_h^0:= \bb{V}_h\cap \bb{H}^1_0$, $\bb{X}_h^0:= \bb{X}_h\cap \hzerocurl$, and $\bb{RT}_h^0 := \bb{RT}_h \cap \hzerodiv$. Moreover, let $Q_h^0:= Q_h\cap L^2_0$ and $V_h^0:= Q_h\cap H^1_0$.

For contractible domains in $\bb{R}^3$, we have the following de Rham exact sequence structure for the function spaces:
\begin{equation}\label{equ:exact cont}
	\begin{tikzcd}
		H^1_0 \arrow[r, "\nabla"] & \mathbb{H}_0(\mathrm{curl}) \arrow[r, "\mathrm{curl}"] & \mathbb{H}_0(\mathrm{div}) \arrow[r, "\mathrm{div}"] & L^2_0.
	\end{tikzcd}
\end{equation}
and for the finite element spaces:
\begin{equation}\label{equ:exact fe}
	\begin{tikzcd}
		Q_h^0 \arrow[r, "\nabla"] & \mathbb{X}_h^0 \arrow[r, "\mathrm{curl}"] & \mathbb{RT}_h^0 \arrow[r, "\mathrm{div}"] & DG_h.
	\end{tikzcd}
\end{equation}
The above de Rham complexes are linked by the canonical interpolators, or the corresponding quasi-interpolators  $\mathcal{I}_h$ defined in~\cite{ErnGue17}, leading to the following commutative diagram~\cite{ArnFalWin06}:
\begin{equation}\label{equ:comm}
	\begin{tikzcd}
		H^1_0 \arrow[r, "\nabla"] \arrow[d, "\mathcal{I}_h^Q"] & \hzerocurl \arrow[r, "\mathrm{curl}"] \arrow[d, "\mathcal{I}_h^{\bb{X}}"] & \hzerodiv \arrow[r, "\mathrm{div}"] \arrow[d, "\mathcal{I}_h^{\bb{RT}}"] & L^2_0 \arrow[d, "\mathcal{I}_h^{DG}"] \\
		Q_{h}^0 \arrow[r, "\nabla"] & \mathbb{X}_h^0 \arrow[r, "\mathrm{curl}"]                                                     & \mathbb{RT}_h^0 \arrow[r, "\mathrm{div}"]                     & DG_h              
	\end{tikzcd}
\end{equation}

\subsection{Analytical tools}\label{sec:analytic}

Let $\bb{FE}_h$ be one of the finite element spaces $\bb{V}_h^0$, $\bb{X}_h^0$, or $\bb{RT}_h^0$. We denote by $\Pi_h^{\bb{FE}}$ the orthogonal projection onto $\bb{FE}_h$ with respect to the $\bb{L}^2$-inner product, satisfying
\begin{align}\label{equ:Pi h L2}
	\inpro{\Pi_h^{\bb{FE}} \bff{v}-\bff{v}}{\bff{\phi}_h}=0, \quad \forall \bff{\phi}_h\in \bb{FE}_h.
\end{align}
This orthogonality relation implies that $\Pi_h^{\bb{FE}} \bff{v}$ is the best approximation of $\bff{v}$ in $\bb{FE}_h$ with respect to the $\bb{L}^2$ norm. Consequently, by~\cite[Theorem~2.4]{ErnGue18}, for any $s\in [0,1]$, there exists a constant $C$ depending on $s$ such that
\begin{align}\label{equ:PiV approx}
	\norm{\bff{v}-\Pi_h^{\bb{V}} \bff{v}}{\bb{L}^2}
	&\leq
	Ch^{1+s} \norm{\bff{v}}{\bb{H}^{1+s}},
	\\
	\label{equ:PiX approx}
	\norm{\bff{v}-\Pi_h^{\mathbb{X}} \bff{v}}{\bb{L}^2}
	&\leq
	Ch^{s} \norm{\bff{v}}{\bb{H}^{s}},
	\\
	\label{equ:PiRT approx}
	\norm{\bff{v}-\Pi_h^{\mathbb{RT}} \bff{v}}{\bb{L}^2}
	&\leq
	Ch^s \norm{\bff{v}}{\bb{H}^{s}}.
\end{align}


The quasi-interpolation operators $\mathcal{I}_h^{\bb{X}}$ and $\mathcal{I}_h^{\bb{RT}}$ in~\eqref{equ:comm} satisfy the following approximation properties~\cite{ErnGue18}: If $s\in(0,1]$ and $p\in[1,\infty)$, then there exists a constant $C$ depending on $s$ and $p$ such that
\begin{align}\label{equ:interp ned approx}
	\norm{\bff{v}- \mathcal{I}_h^{\bb{X}} \bff{v}}{\bb{L}^p}
	&\leq
	Ch^s \norm{\bff{v}}{\bb{W}^{s,p}},
	\\
	\label{equ:interp RT approx}
	\norm{\bff{v}- \mathcal{I}_h^{\bb{RT}} \bff{v}}{\bb{L}^p}
	&\leq
	Ch^s \norm{\bff{v}}{\bb{W}^{s,p}}.
\end{align}
Similarly, we also have the following stability property~\cite{BeiHuMas25} for the projector $\Pi_h^{\mathbb{X}}$:
\begin{align}\label{equ:proj Pi stab}
	\norm{\Pi_h^{\bb{X}} \bff{v}}{\bb{L}^p}
	&\leq
	C \norm{\bff{v}}{\bb{L}^p}, \quad p\in [1,\infty].
\end{align}

The MINI finite element pair $(\bb{V}_{h}^0, Q_h^0)$ satisfies the discrete inf-sup condition~\cite{Vol16}: There exists a positive constant $\beta$, depending only on $\mathscr{D}$, such that
\begin{align}\label{equ:inf sup}
	\inf_{0\neq q_h\in Q_h^0} \sup_{\bff{0}\neq \bff{v}_h\in \bb{V}_{h}^0} \frac{\inpro{\divg \bff{v}_h}{q_h}}{\norm{\nabla\bff{v}_h}{\bb{L}^2} \norm{q_h}{L^2}} \geq \beta.
\end{align}

We shall need the Stokes projection operator for the fluid variables. More precisely, we define the Stokes projector $\mathcal{S}_h: \bb{H}^1_0\times L^2_0\to \bb{V}_{h}^0\times Q_h^0$ by $\mathcal{S}_h(\bff{v},q):=(\mathcal{S}_h \bff{v}, \mathcal{S}_h q)$ such that 
\begin{subequations}\label{equ:stokes proj}
	\begin{alignat}{2}
		&\nu \inpro{\nabla \mathcal{S}_h \bff{v}-\nabla \bff{v}}{\nabla\bff{\phi}_h}- \inpro{\mathcal{S}_h q - q}{\divg \bff{\phi}_h}= 0,
		\; && \quad\forall \bff{\phi}_h \in \bb{V}_{h}^0,
		\label{equ:stokes proj u}
		\\[1ex]
		&\inpro{\divg \mathcal{S}_h \bff{v}- \divg \bff{v}}{q_h} = 0,
		\; && \quad\forall q_h \in Q_h.
		\label{equ:stokes proj p}
	\end{alignat}
\end{subequations}
The Stokes projector satisfies the following approximation property~\cite{GaoQiu19, Vol16}: For $s\in [0,1]$,
\begin{align}\label{equ:Stokes approx}
	\norm{\bff{v}- \mathcal{S}_h \bff{v}}{\bb{H}^1} + \norm{q-\mathcal{S}_h q}{L^2}
	&\leq
	Ch^s \left(\norm{\bff{v}}{\bb{H}^{1+s}}+ \norm{q}{H^s}\right).
\end{align}

As in \cite{HuXu19, LaaHuFar23}, we utilise the discrete curl operator $\curlh: \bb{L}^2 \to \bb{X}_h^0$ defined by
\begin{align}\label{equ:curlh}
	\inpro{\curlh \bff{C}_h}{\bff{D}_h}= \inpro{\bff{C}_h}{\curl \bff{D}_h}, \quad \forall \bff{D}_h\in \bb{X}_h^0.
\end{align}
For any $\bff{C}_h\in \bb{RT}_h^0$, the generalised discrete Gaffney inequality~\cite{HeHuXu19} holds:
\begin{align}\label{equ:gaffney}
	\norm{\bff{C}_h}{\bb{L}^{3+\delta}} \leq \norm{\curlh \bff{C}_h}{\bb{L}^2} + \norm{\divg \bff{C}_h}{\bb{L}^2},
\end{align}
where $\delta\in [0,3]$ depends on the regularity of $\mathscr{D}$.

\subsection{Problem formulation and numerical scheme}

A weak formulation of the problem \eqref{equ:reg mhd} can be written as follows: Let the initial data $(\bff{u}(0),\bff{B}(0))= (\bff{u}_0,\bff{B}_0)$ satisfy the compatibility conditions $\bff{u}_0\in \bb{H}^1_0$ with $\divg \bff{u}_0=0$, and $\bff{B}_0\in \hzerodiv\cap \hcurl\cap \bb{L}^\infty$ with $\divg \bff{B}_0=0$ and $\bff{J}(0)=\curl \bff{B}_0$. We seek $(\bff{u}, p, \bff{B}, \bff{E}, \bff{J})$ with 
\begin{equation}\label{equ:exa sol req}
	\left\{
	\begin{aligned}
		&\bff{u}\in H^1_T(\bb{H}^1_0), \quad 
		p\in L^2_T(L^2_0), \quad
		\bff{J} \in L^\infty_T(\bb{L}^2)\cap H^1_T(\hzerocurl'),
		\\
		&\bff{B}\in H^1_T(\hzerodiv') \cap L^\infty_T(\hzerodiv) \cap L^2_T(\bb{L}^\infty), \quad
		\bff{E} \in L^2_T(\hzerocurl),
	\end{aligned}
	\right.
\end{equation}
such that
\begin{subequations}\label{equ:weak reg mhd}
	\begin{alignat}{2}
		&\inpro{\pa_t \bff{u}}{\bff{\phi}} + \alpha_1 \inpro{\nabla \pa_t \bff{u}}{\nabla \bff{\phi}} 
		+ \nu \inpro{\nabla \bff{u}}{\nabla \bff{\phi}} + \inpro{(\bff{u}\cdot\nabla)\bff{u}}{\bff{\phi}} 
		\; && 
		\nonumber\\
		&\qquad\qquad - \inpro{p}{\divg \bff{\phi}} - \inpro{\bff{J}\times \bff{B}}{\bff{\phi}}
		= 
		0,
		\; && \quad \forall \bff{\phi}\in \bb{H}^1_0,
		\label{equ:weak reg mhd u}
		\\[1ex]
		&\inpro{\pa_t \bff{B}}{\bff{\psi}} + \inpro{\curl \bff{E}}{\bff{\psi}}= 0,
		\; && \quad \forall \bff{\psi} \in \hzerodiv,
		\label{equ:weak reg mhd B}
		\\[1ex]
		&\alpha_2 \inpro{\pa_t \bff{J}}{\bff{\chi}}+ \sigma \inpro{\bff{J}}{\bff{\chi}} + \eta \inpro{\bff{J}\times \bff{B}}{\bff{\chi}}
		\; && 
		\nonumber\\
		&\qquad\qquad
		= 
		\inpro{\bff{E}}{\bff{\chi}} + \inpro{\bff{u}\times \bff{B}}{\bff{\chi}},
		\; && \quad \forall \bff{\chi} \in \hzerocurl,
		\label{equ:weak reg mhd E}
		\\[1ex]
		&\inpro{\bff{J}}{\bff{\omega}}- \inpro{\bff{B}}{\curl\bff{\omega}} = 0,
		\; && \quad\forall \bff{\omega} \in \hzerocurl,
		\label{equ:weak reg mhd J}
		\\[1ex]
		&\inpro{\divg\bff{u}}{q} = 0,
		\; && \quad\forall q\in L^2_0.
		\label{equ:weak reg mhd c}
	\end{alignat}
\end{subequations}
Here, $\hzerodiv'$ and $\hzerocurl'$ denote, respectively, the dual of $\hzerodiv$ and $\hzerocurl$ with respect to the $L^2$ inner product.

Let $\tau>0$ denote the uniform time step and $t_n=n\tau$ for $n=0,1,\ldots,N$, where $N=\lfloor{T/\tau}\rfloor$. For any time-discrete function $\bff{v}$, we write $\bff{v}^n:= \bff{v}(t_n)$ and
\begin{align*}
	\dtt \bff{v}^n:= \frac{\bff{v}^n-\bff{v}^{n-1}}{\tau}.
\end{align*}

A fully discrete divergence-preserving and linear mixed finite element scheme for solving problem~\eqref{equ:weak reg mhd} can be described as follows.

\begin{algorithm}[Linear fully discrete mixed FEM scheme for regularised Hall--MHD]
	\label{alg:linear fem mhd}
	\begin{algorithmic}[1]
		Let
		\[
		\bb{Y}_h := \bb{V}_h^0 \times Q_h^0 \times \bb{RT}_h^0 \times \bb{X}_h^0 \times \bb{X}_h^0.
		\]
		be the finite element space.
		\STATE \textbf{Given:} mesh size $h$,
		time step $\tau$, initial data 
		$(\bff{u}_h^0, \bff{B}_h^0)= (\mathcal{S}_h \bff{u}_0, \mathcal{I}_h^{\bb{RT}} \bff{B}_0)$, and $\bff{J}_h^0=\curlh \bff{B}_h^0$.
		
		\FOR{$n = 1,2,\dots,N$}
		\STATE Given $(\bff{u}_h^{n-1}, p_h^{n-1}, \bff{B}_h^{n-1}, \bff{E}_h^{n-1}, \bff{J}_h^{n-1})$,
		solve the following \emph{linear system}:
		
		\STATE Find $(\bff{u}_h^n, p_h^n, \bff{B}_h^n, \bff{E}_h^n, \bff{J}_h^n) \in \bb{Y}_h$ such that
		for all $(\bff{\phi}_h, q_h, \bff{\psi}_h, \bff{\chi}_h, \bff{\omega}_h) \in \bb{Y}_h$,
		\begin{subequations}\label{equ:fem reg mhd}
			\begin{align}
				&\inpro{\dtt \bff{u}_h^n}{\bff{\phi}_h}
				+ \alpha_1 \inpro{\nabla \dtt \bff{u}_h^n}{\nabla \bff{\phi}_h}
				+ \nu \inpro{\nabla \bff{u}_h^n}{\nabla \bff{\phi}_h}
				\nonumber\\
				&\quad
				+ \frac12 \Big[
				\inpro{(\bff{u}_h^{n-1}\cdot \nabla)\bff{u}_h^n}{\bff{\phi}_h}
				- \inpro{(\bff{u}_h^{n-1}\cdot \nabla)\bff{\phi}_h}{\bff{u}_h^n}
				\Big]
				\nonumber\\
				&\quad
				- \inpro{p_h^n}{\divg \bff{\phi}_h}
				- \inpro{\bff{J}_h^n \times \bff{B}_h^{n-1}}{\bff{\phi}_h)}
				= 0,
				\label{equ:fem reg mhd a}
				\\[1ex]
				&\inpro{\dtt \bff{B}_h^n}{\bff{\psi}_h}
				+ \inpro{\curl \bff{E}_h^n}{\bff{\psi}_h} = 0,
				\label{equ:fem reg mhd b}
				\\[1ex]
				&\alpha_2 \inpro{\dtt \bff{J}_h^n}{\bff{\chi}_h}
				+ \sigma \inpro{\bff{J}_h^n}{\bff{\chi}_h}
				+ \eta \inpro{\bff{J}_h^n \times \bff{B}_h^{n-1}}{\bff{\chi}_h}
				\nonumber\\
				&\quad
				= \inpro{\bff{E}_h^n}{\bff{\chi}_h}
				+ \inpro{\bff{u}_h^n \times \bff{B}_h^{n-1}}{\bff{\chi}_h)},
				\label{equ:fem reg mhd c}
				\\[1ex]
				&\inpro{\bff{J}_h^n}{\bff{\omega}_h}
				- \inpro{\bff{B}_h^n}{\curl \bff{\omega}_h} = 0,
				\label{equ:fem reg mhd d}
				\\[1ex]
				&\inpro{\divg \bff{u}_h^n}{q_h} = 0.
				\label{equ:fem reg mhd e}
			\end{align}
		\end{subequations}
		
		\ENDFOR
	\end{algorithmic}
\end{algorithm}

In Algorithm~\ref{alg:linear fem mhd}, other choices for the initial data approximation, for instance $(\mathcal{I}_h^{\bb{V}}\bff{u}_0, \mathcal{I}_h^{\bb{RT}} \bff{B}_0)$, are possible, provided they approximate the exact initial conditions accurately, and more importantly preserves the divergence-free condition on $\bff{B}_0$. 
Well-posedness, stability, convergence, and structure-preserving properties of this mixed scheme will be established in Section~\ref{sec:conv}.

In our error analysis, we assume that \eqref{equ:weak reg mhd} admits a sufficiently regular solution, namely, in addition to~\eqref{equ:exa sol req} we have, for some $s>0$,
\begin{equation}\label{equ:assum}
	\bff{u},\bff{B},\bff{J} \in W^{1,\infty}_T(\bb{H}^{1+s}) \cap H^2_T(\bb{L}^2),
	\quad
	\text{and}
	\quad
	\bff{E} \in L^\infty_T(\bb{H}^{1+s}).
\end{equation}
Under these regularity assumptions, our main convergence result is stated in the following theorem, the detailed proof of which will be presented in Section~\ref{sec:conv}. In particular, for $\sigma>0$, the error estimate holds even for $\alpha_1=\alpha_2=0$, corresponding to the unregularised Hall--MHD system.
\begin{theorem}\label{the:main}
	Let $(\bff{u}_h^n, p_h^n, \bff{B}_h^n, \bff{E}_h^n, \bff{J}_h^n)$ be computed by Algorithm~\ref{alg:linear fem mhd}, and let $(\bff{u}, p ,\bff{B},\bff{E}, \bff{J})$ be the solution of \eqref{equ:weak reg mhd} with regularity depicted in~\eqref{equ:assum}. Then, for $n\in \{1,2,\ldots,N\}$,
	\begin{align*}
		\norm{\bff{u}_h^n-\bff{u}^n}{\bb{L}^2}
		+
		\alpha_1 \norm{\nabla \bff{u}_h^n- \nabla \bff{u}^n}{\bb{L}^2}
		+
		\norm{\bff{B}_h^n-\bff{B}^n}{\bb{L}^2}
		+
		\alpha_2 \norm{\bff{J}_h^n-\bff{J}^n}{\bb{L}^2}
		\leq
		C(h^{s}+\tau).
	\end{align*}
	If $\sigma>0$, then the constant $C$ depends on $T$ and possibly on $\alpha_1,\alpha_2$, but is independent of $n$, $h$, $\tau$, and remains bounded as $\alpha_1,\alpha_2\to 0$.
	If $\sigma=0$, this estimate remains valid, with $C$ depending on $T$ and $\alpha_2^{-1}$.
\end{theorem}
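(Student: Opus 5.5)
We split every error into a projection part and a discrete part, and run a discrete energy argument on the discrete part, with Gronwall closing the estimate. The projections are chosen so that the linear terms cancel. For $\bff{u}, p$ we use the Stokes projector $\mathcal{S}_h$ from \eqref{equ:stokes proj}. For $\bff{B}$ we use $\mathcal{I}_h^{\bb{RT}}$, and for $\bff{E}$ we use $\mathcal{I}_h^{\bb{X}}$; by the commuting diagram \eqref{equ:comm}, $\curl \mathcal{I}_h^{\bb{X}}\bff{E}=\mathcal{I}_h^{\bb{RT}}\curl\bff{E}$. For $\bff{J}$ we use $\Pi_h^{\bb{X}}$. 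We then write
\[
\bff{u}_h^n-\bff{u}^n=\thetau^n+\rhou^n,\qquad \thetau^n:=\bff{u}_h^n-\mathcal{S}_h\bff{u}^n,\qquad \rhou^n:=\mathcal{S}_h\bff{u}^n-\bff{u}^n,
\]
and define $\thetab,\rhob,\thetae,\rhoe,\thetaj,\rhoj,\thetap,\rhop$ in the same way. The $\rho$-terms are $O(h^s)$ by \eqref{equ:Stokes approx}, \eqref{equ:interp ned approx}, \eqref{equ:interp RT approx}, \eqref{equ:PiX approx} and \eqref{equ:assum}. It therefore remains to bound the $\theta$-terms.

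Subtract \eqref{equ:weak reg mhd} at $t_n$ from \eqref{equ:fem reg mhd}, which gives error equations with consistency residuals $\dtt\bff{v}^n-\pa_t\bff{v}(t_n)=O(\tau)$ (from $H^2_T(\bb{L}^2)$) and lagging residuals $\bff{B}^n-\bff{B}^{n-1}=O(\tau)$. Test with
\[
\bff{\phi}_h=\thetau^n,\quad q_h=\thetap^n,\quad \bff{\psi}_h=\thetab^n,\quad \bff{\chi}_h=\thetae^n,\quad \bff{\omega}_h=\dtt\thetab^n\text{-type combination}.
\]
More precisely, mirror the energy identity. Test the induction error with $\thetab^n$. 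By \eqref{equ:fem reg mhd d}, $\inpro{\curl\thetae^n}{\thetab^n}=\inpro{\thetaj^n}{\thetae^n}+\text{(projection terms)}$, since $\thetae^n\in\bb{X}_h^0$. Test the Ohm error with $\bff{\chi}_h=\thetaj^n$, so that $\inpro{\thetae^n}{\thetaj^n}$ cancels. The Lorentz term $\inpro{\thetaj^n\times\bff{B}_h^{n-1}}{\thetau^n}$ cancels against $\inpro{\thetau^n\times\bff{B}_h^{n-1}}{\thetaj^n}$. The Hall term $\eta\inpro{\thetaj^n\times\bff{B}_h^{n-1}}{\thetaj^n}$ vanishes, and the skew convection form vanishes on $\thetau^n$. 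The identity $2\inpro{a-b}{a}=|a|^2-|b|^2+|a-b|^2$ then gives control of
\[
\tfrac{1}{2}\dtt\big(\norm{\thetau^n}{}^2+\alpha_1\norm{\nabla\thetau^n}{}^2+\norm{\thetab^n}{}^2+\alpha_2\norm{\thetaj^n}{}^2\big)+\nu\norm{\nabla\thetau^n}{}^2+\sigma\norm{\thetaj^n}{}^2 .
\]
Because $\divg\bff{B}_h^n=0$ exactly, we can use the Gaffney inequality \eqref{equ:gaffney} with $\curlh\thetab^n\approx\thetaj^n$ when needed.

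The remaining work is to bound the cross terms, which are products of the nonlinearities with the splitting. A typical term is
\[
\eta\inpro{\bff{J}^n\times(\bff{B}_h^{n-1}-\bff{B}^n)}{\thetaj^n}.
\]
Split $\bff{B}_h^{n-1}-\bff{B}^n$ into $\thetab^{n-1}$, $\rhob^{n-1}$ and $O(\tau)$, and use $\bff{J}\in L^\infty(\bb{L}^\infty)$ (by $\bb{H}^{1+s}\subset\bb{L}^\infty$). This gives the bound $C(\norm{\thetab^{n-1}}{}+h^s+\tau)\norm{\thetaj^n}{}$. Similar splittings handle $\bff{u}\times\bff{B}$, $\bff{J}\times\bff{B}$ and the convection, with the exact solution always placed in $\bb{L}^\infty$ or $\bb{W}^{1,3}$. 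The convection terms are absorbed by $\nu\norm{\nabla\thetau^n}{}^2$, or by the $\alpha_1$-term. The projection residuals such as $\inpro{\rhoj^n}{\bff{\chi}_h}$ vanish by $\bb{L}^2$-orthogonality, and $\dtt\rho$ terms are $O(h^s)$ by $W^{1,\infty}_T(\bb{H}^{1+s})$.

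The main obstacle is the terms linear in $\thetaj^n$, for example the Hall cross term above. When $\sigma>0$ they are absorbed by $\sigma\norm{\thetaj^n}{}^2$ via Young's inequality, with constants independent of $\alpha_1,\alpha_2$. When $\sigma=0$ they must instead be bounded by $\alpha_2^{-1}$ times quantities already in the energy together with $\alpha_2\norm{\thetaj^n}{}^2$, which is why $C$ depends on $\alpha_2^{-1}$ in that case. A second delicate point is that the terms with $\bff{B}_h^{n-1}$ must not carry the discrete solution in $\bb{L}^\infty$. We handle this by always splitting the discrete field, e.g. $\bff{B}_h^{n-1}=\thetab^{n-1}+\mathcal{I}_h^{\bb{RT}}\bff{B}^{n-1}$, and using the skew/orthogonality cancellations so that only $\theta\cdot\theta\cdot(\text{exact})$ products remain. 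Where a quadratic term $\thetab\times\thetaj$ survives, we use \eqref{equ:gaffney} in $\bb{L}^{3}$ together with an induction or inverse-estimate bound on $\norm{\thetab}{\bb{L}^\infty}$.

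Finally, sum over $n$. The initial errors are $O(h^s)$ by the choice of initial data $(\mathcal{S}_h\bff{u}_0,\mathcal{I}_h^{\bb{RT}}\bff{B}_0)$. Applying the discrete Gronwall inequality for $\tau$ small gives $\theta=O(h^s+\tau)$, and the triangle inequality then yields the stated estimate.
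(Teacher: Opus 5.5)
There is a genuine gap in the step you summarise as ``(projection terms)''. It comes from taking $\mathcal{I}_h^{\bb{RT}}\bff{B}$ as the reference for $\bff{B}$. This choice does make the induction residual purely temporal via the commuting diagram, but it breaks the Amp\`ere step. Testing the Amp\`ere error equation with $\bff{\omega}_h=\thetae^n$ gives
\[
\inpro{\thetaj^n}{\thetae^n}=\inpro{\thetab^n}{\curl\thetae^n}+\inpro{\rhob^n}{\curl\thetae^n}-\inpro{\rhoj^n}{\thetae^n}.
\]
The last term vanishes because $\Pi_h^{\bb{X}}$ is the $\bb{L}^2$-projection. However, $\rhob^n=\mathcal{I}_h^{\bb{RT}}\bff{B}^n-\bff{B}^n$ is not $\bb{L}^2$-orthogonal to $\bb{RT}_h^0$. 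So $\inpro{\rhob^n}{\curl\thetae^n}$ survives the cancellation of $\inpro{\curl\thetae^n}{\thetab^n}$ against $\inpro{\thetae^n}{\thetaj^n}$. Nothing in your energy identity controls $\thetae^n$ or $\curl\thetae^n$, so Young's inequality cannot absorb this term. Your remark that projection residuals vanish by $\bb{L}^2$-orthogonality holds for $\rhoj$, but not for $\rhob$.

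The paper avoids this by using $\thetab^n=\bff{B}_h^n-\Pi_h^{\bb{RT}}\bff{B}^n$. Since $\curl\thetae^n\in\bb{RT}_h^0$ by \eqref{equ:exact fe}, both $\inpro{\rhob^n}{\curl\thetae^n}$ and $\inpro{\dtt\rhob^n}{\thetab^n}$ vanish. The commuting diagram is then applied to the induction residual: $\curl\rhoe^n=\mathcal{I}_h^{\bb{RT}}\curl\bff{E}^n-\curl\bff{E}^n$ is $O(h^s)$ in $\bb{L}^2$ by \eqref{equ:interp RT approx}.

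If you want to keep $\mathcal{I}_h^{\bb{RT}}$, you must eliminate $\curl\thetae^n$ using the discrete induction error equation. With your choice, $\dtt\thetab^n+\curl\thetae^n$ is an $O(\tau)$ consistency residual in $\bb{RT}_h^0$. You then sum $\tau\sum_m\inpro{\rhob^m}{\dtt\thetab^m}$ by parts in time. This step is missing from your proposal.

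With either repair, the rest of your plan is the paper's argument:
\begin{itemize}
\item The test functions are $(\thetau^n,\thetap^n,\thetab^n,\thetaj^n,\thetae^n)$. Your first-listed ``$\dtt\thetab^n$-type'' $\bff{\omega}_h$ is inadmissible, since it lies in $\bb{RT}_h^0$, not $\bb{X}_h^0$.
\item The skew-symmetric cancellations are the same.
\item The treatment of $\sigma>0$ versus $\sigma=0$ is the same.
\end{itemize}

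No term ever needs $\thetab$ in $\bb{L}^\infty$, so your fallback to induction or inverse estimates is unnecessary. The paper uses only $\norm{\bff{B}_h^{n-1}}{\bb{L}^3}$ from \eqref{equ:stab B L3}, plus the splitting $\bff{J}_h^n=\thetaj^n+\Pi_h^{\bb{X}}\bff{J}^n$ in the Hall term. This keeps $\tau$ and $h$ decoupled.

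Finally, your claim that all initial errors are $O(h^s)$ needs an argument for $\thetaj^0=\curlh(\mathcal{I}_h^{\bb{RT}}\bff{B}_0-\bff{B}_0)$. Approximation of $\bff{B}_0$ in $\bb{L}^2$, combined with the factor $h^{-1}$ hidden in $\curlh$, only gives $\norm{\thetaj^0}{\bb{L}^2}\leq C$. The paper's proof does not address this term either.
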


\subsection{The 2.5D Hall--MHD ($d=2$)}\label{sec:25d}

Let $\mathscr{D}\subset \bb{R}^2$.
The 2.5D Hall--MHD, which assumes translational invariance along the $z$-axis while retaining all three components of the velocity and magnetic vector fields~\cite{DonSer12, LaaHuFar23, MajBer02}, will be discussed briefly. Under this assumption, the 3D unknowns decouple into in-plane vectors and out-of-plane scalars, namely $\bff{u}=(\widetilde{\bff{u}}, u_3)$, $\bff{B}=(\widetilde{\bff{B}}, B_3)$, $\bff{J}=(\widetilde{\bff{J}}, J_3)$, and $\bff{E}=(\widetilde{\bff{E}}, E_3)$.

We denote the in-plane gradient and in-plane Laplacian as $\nabla_\perp= (\pa_x,\pa_y)^\top$ and $\Delta_\perp= \nabla_\perp \cdot \nabla_\perp$, respectively. The 3D curl operator splits into two planar operators: the \emph{vector curl} $\nabla^\perp$ mapping scalar fields to in-plane vector fields, and the \emph{scalar curl} ($\mathrm{rot}$) mapping in-plane vector fields to scalars. For a scalar function $\phi:\mathscr{D}\subset \bb{R}^2 \to \bb{R}$ and an in-plane vector field $\widetilde{\bff{v}}=(v_1,v_2)$, these are defined by
\begin{align*}
	\nabla^\perp \phi:= (\pa_y \phi, -\pa_x \phi)^\top, \quad \mathrm{rot}\,\widetilde{\bff{v}}:=\pa_x v_2-\pa_y v_1.
\end{align*}
These operators are consistent with the 2D analogues of the cross product, namely
\begin{align*}
	\widetilde{\bff{v}}\times \phi:= (v_2 \phi, -v_1 \phi)^\top, \quad
	\widetilde{\bff{v}} \times \widetilde{\bff{w}} := v_1 w_2 - v_2 w_1.
\end{align*}

With these operators, the continuous 3D Hall--MHD system naturally decomposes into an in-plane system governing the transverse fields and an out-of-plane system describing scalar transport and magnetic stretching:
\begin{subequations}\label{equ:hall 25d}
	\begin{alignat}{1}
		(I-\alpha_1 \Delta_\perp) \pa_t \widetilde{\bff{u}} - \nu \Delta_\perp \widetilde{\bff{u}} + (\widetilde{\bff{u}} \cdot \nabla_\perp) \widetilde{\bff{u}} + \nabla_\perp p - \big(\widetilde{\bff{J}} \times B_3 - \widetilde{\bff{B}} \times J_3\big) &= \bff{0},
		\label{equ:hall 25d a}
		\\
		(I-\alpha_1 \Delta_\perp) \pa_t u_3 - \nu \Delta_\perp u_3 + (\widetilde{\bff{u}} \cdot \nabla_\perp)u_3 - (\widetilde{\bff{J}} \times \widetilde{\bff{B}}) &= 0,
		\label{equ:hall 25d b}
		\\
		\pa_t \widetilde{\bff{B}} + \nabla^\perp E_3 &= \bff{0},
		\label{equ:hall 25d c}
		\\
		\pa_t B_3 + \mathrm{rot}\, \widetilde{\bff{E}} &= 0,
		\label{equ:hall 25d d}
		\\
		\alpha_2 \pa_t \widetilde{\bff{J}}+ \sigma \widetilde{\bff{J}} + \eta \big(\widetilde{\bff{J}} \times B_3 - \widetilde{\bff{B}} \times J_3\big)
		-
		\widetilde{\bff{E}} - \big(\widetilde{\bff{u}} \times B_3 - \widetilde{\bff{B}} \times u_3\big) &= \bff{0},
		\label{equ:hall 25d e}
		\\
		\alpha_2 \pa_t J_3+ \sigma J_3+ \eta \widetilde{\bff{J}} \times \widetilde{\bff{B}} - E_3 - \widetilde{\bff{u}} \times \widetilde{\bff{B}} &= 0,
		\label{equ:hall 25d f}
		\\
		\widetilde{\bff{J}}- \nabla^\perp B_3 &= \bff{0},
		\label{equ:hall 25d g}
		\\
		J_3 - \mathrm{rot}\, \widetilde{\bff{B}} &= 0,
		\label{equ:hall 25d h}
		\\
		\nabla_\perp \cdot \widetilde{\bff{u}} = \nabla_\perp \cdot \widetilde{\bff{B}} &= 0,
		\label{equ:hall 25d i}
	\end{alignat}
\end{subequations}
subject to the initial data $\bff{u}(0)=\bff{u}_0$, $\bff{B}(0)=\bff{B}_0$, and boundary conditions:
\begin{align*}
	\widetilde{\bff{u}}= \bff{0}, \quad \widetilde{\bff{B}}\cdot \bff{n}= 0, \quad
	\widetilde{\bff{E}}\cdot \bff{\tau}=0, \quad
	\widetilde{\bff{J}} \cdot \bff{\tau}=0,
	\quad
	u_3 =E_3=J_3=0
	\quad \text{on } \pa \mathscr{D},
\end{align*}
where $\bff{n}=(n_1,n_2)^\top$ and $\bff{\tau}=(-n_2,n_1)^\top$ denote the unit outward normal and the unit tangent vector on $\pa \mathscr{D}$, respectively.

This structural decomposition is mirrored at the discrete level, where the discrete unknowns are sought in conforming finite element spaces:
\begin{align*}
	\bff{u}_h^n=(\widetilde{\bff{u}}_h^n, u_{3,h}^n)\in \bb{V}_h^0\times V_h^0, \quad
	p_h^n\in Q_h^0, \quad
	\bff{B}_h^n= (\widetilde{\bff{B}}_h^n, B_{3,h}^n)\in \bb{RT}_h^0 \times Q_h,
	\\
	\bff{E}_h^n= (\widetilde{\bff{E}}_h^n, E_{3,h}^n)\in \bb{X}_h^0 \times V_h^0, \quad
	\bff{J}_h^n= (\widetilde{\bff{J}}_h^n, J_{3,h}^n)\in \bb{X}_h^0 \times V_h^0.
\end{align*}
Furthermore, the 3D de Rham complex underlying the structure-preserving scheme reduces to the 2D rotated de Rham complex governing the in-plane magnetic field:
\begin{equation}\label{equ:exact cont 2d}
	\begin{tikzcd}
		H^1_0 \arrow[r, "\nabla^\perp"]  & \mathbb{H}_0(\mathrm{div}) \arrow[r, "\mathrm{div}"] & L^2_0.
	\end{tikzcd}
\end{equation}
In particular, when $\widetilde{\bff{B}}_h^n$ is sought in an $\mathbb{H}_0(\mathrm{div})$-conforming finite element space (e.g., Raviart--Thomas), its evolution is described by the in-plane induction equation analogous to \eqref{equ:fem reg mhd b}:
\[
\dtt \widetilde{\bff{B}}_h^n + \nabla^\perp E_{3,h}^n= \bff{0}.
\]
Since $\divg(\nabla^\perp \phi)\equiv 0$ holds, this update at the discrete level preserves the in-plane solenoidal constraint $\divg \widetilde{\bff{B}}_h^n =0$ exactly; cf.~\eqref{equ:fem reg mhd b} and \eqref{equ:dt B curl E} in the full 3D setting.

Furthermore, because this 2.5D formulation perfectly mirrors the antisymmetric coupling of the full 3D equations, it intrinsically inherits the energy dissipation law. To avoid redundancy, the rigorous numerical analysis and stability proofs will be detailed exclusively for the full 3D case, with the understanding that these theoretical guarantees carry over directly to the 2.5D numerical experiments presented in Section~\ref{subsec:tang}.

\section{Stability and convergence analysis}\label{sec:conv}

In this section, we analyse the stability and convergence of our scheme. First, we show that the scheme is well-posed and preserves the divergence of $\bff{B}_h^n$. Recall that the space $\bb{Y}_h$ was defined in Algorithm~\ref{alg:linear fem mhd}.

\begin{proposition}
	For $n=1,2,\ldots,N$, given $(\bff{u}_h^{n-1}, p_h^{n-1}, \bff{B}_h^{n-1}, \bff{E}_h^{n-1}, \bff{J}_h^{n-1})\in \bb{Y}_h$,
	there exists a unique $(\bff{u}_h^n, p_h^n, \bff{B}_h^n, \bff{E}_h^n, \bff{J}_h^n)\in \bb{Y}_h$ solving \eqref{equ:fem reg mhd}. Furthermore,
	\begin{equation}\label{equ:div zero disc}
		\divg \bff{B}_h^n=\divg \bff{B}_h^0.
	\end{equation}
	In particular, $\divg \bff{B}_h^n=0$ if $\divg \bff{B}_h^0=0$.
\end{proposition}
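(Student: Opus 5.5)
Since \eqref{equ:fem reg mhd} is a square linear system on the finite-dimensional space $\bb{Y}_h$, existence and uniqueness are equivalent. It therefore suffices to show that the homogeneous problem has only the trivial solution. The homogeneous problem is obtained by setting the data from step $n-1$ to zero while keeping the coefficients $\bff{u}_h^{n-1}$ and $\bff{B}_h^{n-1}$ fixed. Concretely, with $\dtt \bff{v}_h^n$ replaced by $\bff{v}_h^n/\tau$, we assume
\[
(\bff{u},p,\bff{B},\bff{E},\bff{J})\in\bb{Y}_h
\]
solves the resulting equations and test with
\[
\bff{\phi}_h=\bff{u},\quad q_h=p,\quad \bff{\psi}_h=\bff{B},\quad \bff{\chi}_h=\bff{J},\quad \bff{\omega}_h=\bff{E}.
\]

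The individual terms then behave as follows.
\begin{itemize}
\item The skew-symmetrised convection term vanishes identically.
\item Testing \eqref{equ:fem reg mhd e} with $q_h=p$ gives $\inpro{\divg\bff{u}}{p}=0$, which removes the pressure term from the momentum equation.
\item The Hall term $\eta\inpro{\bff{J}\times\bff{B}_h^{n-1}}{\bff{J}}$ vanishes pointwise.
\item The Lorentz term $-\inpro{\bff{J}\times\bff{B}_h^{n-1}}{\bff{u}}$ cancels the induced-field term $-\inpro{\bff{u}\times\bff{B}_h^{n-1}}{\bff{J}}$ by the scalar triple product identity. Both coupling terms are evaluated with the same lagged field $\bff{B}_h^{n-1}$, which is what makes this cancellation exact.
\item For the electric field, \eqref{equ:fem reg mhd d} with $\bff{\omega}_h=\bff{E}$ gives $\inpro{\bff{J}}{\bff{E}}=\inpro{\bff{B}}{\curl\bff{E}}$. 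This equals $-\tau^{-1}\norm{\bff{B}}{\bb{L}^2}^2$ by \eqref{equ:fem reg mhd b} with $\bff{\psi}_h=\bff{B}$, which is admissible since $\curl\bff{E}\in\bb{RT}_h^0$.
\end{itemize}
Summing, we obtain
\[
\tau^{-1}\bigl(\norm{\bff{u}}{}^2+\alpha_1\norm{\nabla\bff{u}}{}^2+\norm{\bff{B}}{}^2+\alpha_2\norm{\bff{J}}{}^2\bigr)+\nu\norm{\nabla\bff{u}}{}^2+\sigma\norm{\bff{J}}{}^2=0,
\]
with all norms in $\bb{L}^2$. Hence $\bff{u}=\bff{0}$ and $\bff{B}=\bff{0}$, and also $\bff{J}=\bff{0}$: from \eqref{equ:fem reg mhd d} with $\bff{\omega}_h=\bff{J}$ we get $\norm{\bff{J}}{}^2=\inpro{\bff{B}}{\curl\bff{J}}=0$. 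This holds even when $\alpha_2=\sigma=0$.

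The remaining variables are recovered as follows. Equation \eqref{equ:fem reg mhd c} now gives $\inpro{\bff{E}}{\bff{\chi}_h}=0$ for all $\bff{\chi}_h\in\bb{X}_h^0$, so $\bff{E}=\bff{0}$. The momentum equation reduces to $\inpro{p}{\divg\bff{\phi}_h}=0$ for all $\bff{\phi}_h\in\bb{V}_h^0$, so the inf-sup condition \eqref{equ:inf sup} yields $p=0$. I expect the main care point to be exactly these pairings of test functions. The cancellations are where the scheme's design (same $\bff{B}_h^{n-1}$ in both coupling terms, and $\curl\bb{X}_h^0\subset\bb{RT}_h^0$) matters. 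The pressure needs the inf-sup condition rather than the energy identity, since $p$ does not appear in the energy balance.

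For \eqref{equ:div zero disc}, the exact sequence \eqref{equ:exact fe} gives $\curl\bff{E}_h^n\in\bb{RT}_h^0$. Also $\dtt\bff{B}_h^n\in\bb{RT}_h^0$. Taking $\bff{\psi}_h=\dtt\bff{B}_h^n+\curl\bff{E}_h^n$ in \eqref{equ:fem reg mhd b} shows that the identity $\dtt\bff{B}_h^n+\curl\bff{E}_h^n=\bff{0}$ holds pointwise. Applying $\divg$ and using $\divg\curl=0$ gives $\divg\bff{B}_h^n=\divg\bff{B}_h^{n-1}$, and induction on $n$ yields $\divg\bff{B}_h^n=\divg\bff{B}_h^0$. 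The final claim follows since $\divg\bff{B}_h^0=\divg\mathcal{I}_h^{\bb{RT}}\bff{B}_0=\mathcal{I}_h^{DG}\divg\bff{B}_0=0$ by the commuting diagram \eqref{equ:comm}.
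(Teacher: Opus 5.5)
Your proposal is correct and follows the paper's proof essentially step for step. You reduce to the homogeneous linear system and test with $(\bff{u},p,\bff{B},\bff{J},\bff{E})$, so that the skew-symmetric convection, Hall, Lorentz/induction and $\bff{J}$--$\bff{E}$ terms cancel into the energy identity; you then recover $\bff{E}=\bff{0}$ from the discrete Ohm's law and $p=0$ from the inf-sup condition, and obtain \eqref{equ:div zero disc} from the pointwise identity $\dtt \bff{B}_h^n+\curl \bff{E}_h^n=\bff{0}$. Your two additions are valid refinements of the paper's argument: deducing $\bff{J}=\bff{0}$ from the discrete Amp\`ere law means the argument does not need $\alpha_2>0$ or $\sigma>0$, and you verify $\divg \bff{B}_h^0=\mathcal{I}_h^{DG}\divg\bff{B}_0=0$ via \eqref{equ:comm}.
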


\begin{proof}
	To show the existence of a unique $(\bff{u}_h^n, p_h^n, \bff{B}_h^n, \bff{E}_h^n, \bff{J}_h^n)$ solving the linear system \eqref{equ:fem reg mhd}, it suffices to show that the corresponding homogeneous system: 
	\begin{subequations}\label{equ:sol reg mhd}
		\begin{alignat}{2}
			&\inpro{\bff{u}_h^n}{\bff{\phi}_h} + \alpha_1 \inpro{\nabla \bff{u}_h^n}{\nabla \bff{\phi}_h} 
			+ \tau\nu \inpro{\nabla \bff{u}_h^n}{\nabla \bff{\phi}_h}
			\; &&
			\nonumber\\
			&\qquad
			+
			\frac12 \tau \left[ \inpro{(\bff{u}_h^{n-1}\cdot \nabla) \bff{u}_h^n}{\bff{\phi}_h} - \inpro{(\bff{u}_h^{n-1}\cdot\nabla) \bff{\phi}_h}{\bff{u}_h^n} \right] 
			\; && 
			\nonumber\\
			&\qquad - \tau\inpro{p_h^n}{\divg \bff{\phi}_h} 
			- \tau \inpro{\bff{J}_h^n \times \bff{B}_h^{n-1}}{\bff{\phi}_h}
			= 
			0,
			\; && \quad \forall \bff{\phi}_h\in \bb{V}_{h}^0,
			\label{equ:sol reg mhd a}
			\\[1ex]
			&\inpro{\bff{B}_h^n}{\bff{\psi}_h} 
			+ \tau \inpro{\curl \bff{E}_h^n}{\bff{\psi}_h}= 0,
			\; && \quad \forall \bff{\psi}_h \in \bb{RT}_h^0,
			\label{equ:sol reg mhd b}
			\\[1ex]
			&\alpha_2 \inpro{\bff{J}_h^n}{\bff{\chi}_h}
			+ \tau \sigma \inpro{\bff{J}_h^n}{\bff{\chi}_h}
			+ \tau \eta \inpro{\bff{J}_h^n\times \bff{B}_h^{n-1}}{\bff{\chi}_h}
			\; &&
			\nonumber\\
			&\qquad
			= \tau \inpro{\bff{E}_h^n}{\bff{\chi}_h}
			+ \tau \inpro{\bff{u}_h^n\times \bff{B}_h^{n-1}}{\bff{\chi}_h},
			\; && \quad\forall \bff{\varphi}_h \in \bb{X}_h^0,
			\label{equ:sol reg mhd c}
			\\[1ex]
			&\inpro{\bff{J}_h^n}{\bff{\omega}_h}
			-\inpro{\bff{B}_h^n}{\curl \bff{\omega}_h}
			= 0,
			\; && \quad\forall \bff{\omega}_h \in \bb{X}_h^0,
			\label{equ:sol reg mhd e}
			\\[1ex]
			&\inpro{\divg\bff{u}_h^n}{q_h} = 0,
			\; && \quad\forall q_h\in Q_h.
			\label{equ:sol reg mhd f}
		\end{alignat}
	\end{subequations}
	has only the trivial solution, since $\bb{Y}_h$ is a finite-dimensional space. To this end, setting $\bff{\phi}_h= \bff{u}_h^n$, $\bff{\psi}_h= \bff{B}_h^n$, $\bff{\chi}_h= \bff{J}_h^n$, $\bff{\omega}_h= \tau \bff{E}_h^n$, and $q_h=\tau p_h^n$ in \eqref{equ:sol reg mhd}, and summing the resulting equations, we obtain
	\begin{align*}
		&\norm{\bff{u}_h^n}{\bb{L}^2}^2
		+
		\alpha_1 \norm{\nabla \bff{u}_h^n}{\bb{L}^2}^2
		+
		\tau\nu \norm{\nabla \bff{u}_h^n}{\bb{L}^2}^2
		+
		\norm{\bff{B}_h^n}{\bb{L}^2}^2
		+
		\alpha_2 \norm{\bff{J}_h^n}{\bb{L}^2}^2
		+
		\tau\sigma \norm{\bff{J}_h^n}{\bb{L}^2}^2
		= 0,
	\end{align*}
	from which we infer that $\bff{u}_h^n=\bff{B}_h^n=\bff{J}_h^n=\bff{0}$. By taking $\bff{\chi}_h=\bff{E}_h^n$ in \eqref{equ:sol reg mhd c}, we obtain $\bff{E}_h^n=\bff{0}$. From \eqref{equ:sol reg mhd a}, we also have $\inpro{p_h^n}{\divg \bff{\phi}_h}=0$ for all $\bff{\phi}_h\in \bb{V}_{h}^0$. By the discrete inf-sup condition \eqref{equ:inf sup} applied to $q_h=p_h^n$, we obtain
	\begin{align*}
		\beta \norm{p_h^n}{L^2}^2 \leq  \sup_{\bff{0}\neq \bff{\phi}_h\in \bb{V}_{h}^0} \frac{\inpro{\divg \bff{\phi}_h}{p_h^n}}{\norm{\nabla \bff{\phi}_h}{\bb{L}^2}}= 0,
	\end{align*}
	and thus $p_h^n=0$. This implies the existence of a unique~$(\bff{u}_h^n, p_h^n, \bff{B}_h^n, \bff{E}_h^n, \bff{J}_h^n)\in \bb{Y}_h$ solving \eqref{equ:fem reg mhd}.

	Finally, since $\mathrm{\curl}(\bb{X}_h^0)\subset \bb{RT}_h^0$ by \eqref{equ:exact fe}, equation \eqref{equ:fem reg mhd b} implies
	\begin{align}\label{equ:dt B curl E}
		\dtt \bff{B}_h^n+ \curl \bff{E}_h^n  =\bff{0}.
	\end{align}
	Taking the divergence of this equation gives $\divg \bff{B}_h^n= \divg \bff{B}_h^{n-1}$ pointwise a.e., thus implying \eqref{equ:div zero disc}, as required.
\end{proof}

At the continuous level, problem \eqref{equ:weak reg mhd} admits an energy dissipation law for the energy functional $\mathcal{E}$ given by
\begin{align}\label{equ:energy}
	\mathcal{E}[\bff{u},\bff{B}, \bff{J}]:= \frac12 \norm{\bff{u}}{\bb{L}^2}^2 + \frac{\alpha_1}{2} \norm{\nabla \bff{u}}{\bb{L}^2}^2 + \frac{1}{2} \norm{\bff{B}}{\bb{L}^2}^2 + \frac{\alpha_2}{2} \norm{\bff{J}}{\bb{L}^2}^2.
\end{align}
Next, we show that the proposed scheme satisfies a corresponding discrete energy law. In particular, the presence of the Voigt terms ($\alpha_1,\alpha_2>0$) yields enhanced stability control of $\nabla \bff{u}_h^n$ and $\bff{J}_h^n$.

\begin{proposition}
	Let $(\bff{u}_h^n, p_h^n, \bff{B}_h^n, \bff{E}_h^n, \bff{J}_h^n)$ be given by Algorithm~\ref{alg:linear fem mhd} and $\mathcal{E}$ be the energy functional defined in \eqref{equ:energy}. Then the following energy dissipation law holds unconditionally:
	\begin{align}\label{equ:ener diss}
		\mathcal{E}[\bff{u}_h^n, \bff{B}_h^n, \bff{J}_h^n] \leq \mathcal{E}[\bff{u}_h^{n-1}, \bff{B}_h^{n-1}, \bff{J}_h^{n-1}].
	\end{align}
	Furthermore, we have for $n=1,2,\ldots,N$,
	\begin{align}\label{equ:stab}
		&\mathcal{E}[\bff{u}_h^n, \bff{B}_h^n, \bff{J}_h^n]
		+
		\tau \nu \sum_{j=1}^n \norm{\nabla \bff{u}_h^j}{\bb{L}^2}^2 
		+
		\tau \sigma \sum_{j=1}^n \norm{\bff{J}_h^j}{\bb{L}^2}^2
		\leq 
		\mathcal{E}[\bff{u}_h^0, \bff{B}_h^0, \bff{J}_h^0].
	\end{align}
	and
	\begin{align}\label{equ:stab B L3}
		\alpha_2 \norm{\bff{B}_h^n}{\bb{L}^3}^2 
		+
		\tau \sigma\sum_{j=1}^n \norm{\bff{B}_h^j}{\bb{L}^3}^2
		\leq C,
	\end{align}
	where $C$ is independent of $\alpha_1$, $\alpha_2$, $n$, $h$, $\tau$, and $T$.
\end{proposition}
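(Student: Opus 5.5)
We test the scheme \eqref{equ:fem reg mhd} with the same choices as in the uniqueness proof: $\bff{\phi}_h=\bff{u}_h^n$, $q_h=p_h^n$, $\bff{\psi}_h=\bff{B}_h^n$, $\bff{\chi}_h=\bff{J}_h^n$, and $\bff{\omega}_h=\bff{E}_h^n$, each multiplied by $\tau$, and then sum the resulting equations.

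\textbf{Cancellations.} Several terms drop out.
\begin{itemize}
\item The skew-symmetrised convection term vanishes identically.
\item The pressure term vanishes by \eqref{equ:fem reg mhd e}, since $p_h^n\in Q_h^0\subset Q_h$.
\item The Hall term $\eta\inpro{\bff{J}_h^n\times\bff{B}_h^{n-1}}{\bff{J}_h^n}$ vanishes pointwise.
\item The Lorentz term $-\inpro{\bff{J}_h^n\times\bff{B}_h^{n-1}}{\bff{u}_h^n}$ cancels against $-\inpro{\bff{u}_h^n\times\bff{B}_h^{n-1}}{\bff{J}_h^n}$, because $(\bff{J}\times\bff{B})\cdot\bff{u}=-(\bff{u}\times\bff{B})\cdot\bff{J}$.
\item The term $\inpro{\curl\bff{E}_h^n}{\bff{B}_h^n}$ from \eqref{equ:fem reg mhd b} cancels with $-\inpro{\bff{E}_h^n}{\bff{J}_h^n}$. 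This follows from \eqref{equ:fem reg mhd d} with $\bff{\omega}_h=\bff{E}_h^n$, which gives $\inpro{\bff{J}_h^n}{\bff{E}_h^n}=\inpro{\bff{B}_h^n}{\curl\bff{E}_h^n}$.
\end{itemize}

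\textbf{Energy inequality.} For the time-difference terms we use the identity $2\inpro{a-b}{a}=\|a\|^2-\|b\|^2+\|a-b\|^2$. This gives
\[
\mathcal{E}^n-\mathcal{E}^{n-1}+\tfrac12\big(\|\bff{u}_h^n-\bff{u}_h^{n-1}\|^2+\alpha_1\|\nabla(\bff{u}_h^n-\bff{u}_h^{n-1})\|^2+\|\bff{B}_h^n-\bff{B}_h^{n-1}\|^2+\alpha_2\|\bff{J}_h^n-\bff{J}_h^{n-1}\|^2\big)+\tau\nu\|\nabla\bff{u}_h^n\|^2+\tau\sigma\|\bff{J}_h^n\|^2=0 .
\]
Dropping the nonnegative terms yields \eqref{equ:ener diss}. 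Summing over $j=1,\dots,n$ (telescoping) yields \eqref{equ:stab}.

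\textbf{The $\bb{L}^3$ bound \eqref{equ:stab B L3}.} We apply the discrete Gaffney inequality \eqref{equ:gaffney} with $\delta=0$ to $\bff{B}_h^j\in\bb{RT}_h^0$. By \eqref{equ:fem reg mhd d} and the definition \eqref{equ:curlh}, we have $\curlh\bff{B}_h^j=\bff{J}_h^j$ for $j\ge1$; for $j=0$ this holds by the choice of initial data. Since $\bff{B}_h^0=\mathcal{I}_h^{\bb{RT}}\bff{B}_0$ and the diagram \eqref{equ:comm} commutes, $\divg\bff{B}_h^0=\mathcal{I}_h^{DG}\divg\bff{B}_0=0$, so $\divg\bff{B}_h^j=0$ by \eqref{equ:div zero disc}. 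Hence $\|\bff{B}_h^j\|_{\bb{L}^3}\le\|\bff{J}_h^j\|_{\bb{L}^2}$. Multiplying by $\alpha_2$, respectively summing with weight $\tau\sigma$, and invoking \eqref{equ:stab} gives
\[
\alpha_2\|\bff{B}_h^n\|_{\bb{L}^3}^2+\tau\sigma\sum_{j}\|\bff{B}_h^j\|_{\bb{L}^3}^2\le 4\,\mathcal{E}[\bff{u}_h^0,\bff{B}_h^0,\bff{J}_h^0].
\]

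\textbf{Main obstacle.} The remaining step is to bound the initial energy $\mathcal{E}[\bff{u}_h^0,\bff{B}_h^0,\bff{J}_h^0]$ independently of $h$ and $\alpha_i$; I expect this to be the hardest part.
\begin{itemize}
\item $\|\mathcal{S}_h\bff{u}_0\|_{\bb{H}^1}$ is bounded by the Stokes approximation estimate \eqref{equ:Stokes approx} with $s=0$, with $p=0$.
\item $\|\mathcal{I}_h^{\bb{RT}}\bff{B}_0\|_{\bb{L}^2}$ is bounded by \eqref{equ:interp RT approx}, using the assumed regularity of $\bff{B}_0$.
\item For $\bff{J}_h^0=\curlh\bff{B}_h^0$, write $\inpro{\curlh\bff{B}_h^0}{\bff{D}_h}=\inpro{\bff{B}_h^0-\bff{B}_0}{\curl\bff{D}_h}+\inpro{\curl\bff{B}_0}{\bff{D}_h}$. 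The first term is controlled using the $\bb{H}^1$-type regularity of $\bff{B}_0$ from \eqref{equ:assum} together with an inverse estimate, or alternatively by the commuting property and $\bb{L}^2$-stability of $\Pi_h^{\bb{X}}$.
\end{itemize}
The resulting constant depends only on the data $\bff{u}_0,\bff{B}_0$, which gives independence from $\alpha_1,\alpha_2,n,h,\tau,T$. Note, however, that $\alpha_1$ and $\alpha_2$ appear as multiplicative factors in $\mathcal{E}$, so the bound stays uniform only for bounded $\alpha_i$.
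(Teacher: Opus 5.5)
Your proposal is correct and follows the paper's proof: the same test functions and cancellations give the discrete energy identity, telescoping gives \eqref{equ:stab}, and the discrete Gaffney inequality with $\bff{J}_h^n=\curlh\bff{B}_h^n$ and $\divg\bff{B}_h^n=0$ gives \eqref{equ:stab B L3}. You also fill in points the paper takes for granted: you verify $\divg\bff{B}_h^0=0$ through the commuting diagram, you bound the initial discrete energy uniformly in $h$, and you note that uniformity in $\alpha_1,\alpha_2$ holds only for bounded $\alpha_i$; for the $h$-bound, rely on the inverse-estimate argument, since the identity $\curlh\Pi_h^{\bb{RT}}=\Pi_h^{\bb{X}}\curl$ holds for the $L^2$ projection and not for $\mathcal{I}_h^{\bb{RT}}$.
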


\begin{proof}
	First, note that by taking $\bff{\chi}_h= \bff{J}_h^n$, we have $\inpro{\bff{\zeta}_h^n}{\bff{J}_h^n}=0$.
	Next, taking $\bff{\phi}_h= \bff{u}_h^n$, $\bff{\psi}_h= \bff{B}_h^n$, $\bff{\chi}_h= \bff{J}_h^n$, $\bff{\omega}_h=\bff{E}_h^n$, and $q_h=p_h^n$ in \eqref{equ:fem reg mhd}, and summing the resulting equations, we obtain
	\begin{align*}
		&\frac{1}{2\tau} \left(\norm{\bff{u}_h^n}{\bb{L}^2}^2 - \norm{\bff{u}_h^{n-1}}{\bb{L}^2}^2 \right) 
		+
		\frac{1}{2\tau} \norm{\bff{u}_h^n-\bff{u}_h^{n-1}}{\bb{L}^2}^2
		\nonumber\\
		&\quad
		+
		\frac{\alpha_1}{2\tau} \left(\norm{\nabla \bff{u}_h^n}{\bb{L}^2}^2 - \norm{\nabla \bff{u}_h^{n-1}}{\bb{L}^2}^2 \right) 
		+
		\frac{\alpha_1}{2\tau} \norm{\nabla \bff{u}_h^n- \nabla \bff{u}_h^{n-1}}{\bb{L}^2}^2
		\nonumber\\
		&\quad
		+
		\frac{1}{2\tau} \left(\norm{\bff{B}_h^n}{\bb{L}^2}^2 - \norm{\bff{B}_h^{n-1}}{\bb{L}^2}^2 \right) 
		+
		\frac{1}{2\tau} \norm{\bff{B}_h^n-\bff{B}_h^{n-1}}{\bb{L}^2}^2
		\nonumber\\
		&\quad
		+
		\frac{\alpha_2}{2\tau} \left(\norm{\bff{J}_h^n}{\bb{L}^2}^2 - \norm{\bff{J}_h^{n-1}}{\bb{L}^2}^2 \right) 
		+
		\frac{\alpha_2}{2\tau} \norm{\bff{J}_h^n-\bff{J}_h^{n-1}}{\bb{L}^2}^2
		\nonumber\\
		&\quad
		+
		\nu \norm{\nabla \bff{u}_h^n}{\bb{L}^2}^2 + \sigma \norm{\bff{J}_h^n}{\bb{L}^2}^2
		= 0.
	\end{align*}
	Summing over $j\in \{1,2,\ldots,n\}$, we deduce \eqref{equ:stab}.
	
	Finally, \eqref{equ:fem reg mhd d} implies $\bff{J}_h^n= \curlh \bff{B}_h^n$. Therefore, since $\divg \bff{B}_h^n=0$ by \eqref{equ:div zero disc}, we have by the generalised Gaffney inequality \eqref{equ:gaffney},
	\[
	\norm{\bff{B}_h^n}{\bb{L}^3}\leq \norm{\curlh \bff{B}_h^n}{\bb{L}^2}= \norm{\bff{J}_h^n}{\bb{L}^2}.
	\]
	The estimate \eqref{equ:stab B L3} then follows from \eqref{equ:stab}, thus completing the proof of the proposition.
\end{proof}

We aim to prove error estimates for our scheme. To this end, we utilise the following error decompositions:
\begin{align}
	\label{equ:uh error decomp}
	&\bff{u}_h^n-\bff{u}^n= (\bff{u}_h^n-\mathcal{S}_h\bff{u}^n) + (	\mathcal{S}_h\bff{u}^n- \bff{u}^n) =: \thetau^n + \rhou^n,
	\\
	\label{equ:ph error decomp}
	&p_h^n-p^n= (p_h^n-\mathcal{S}_h p^n) + (	\mathcal{S}_h p^n- p^n) =: \thetap^n + \rhop^n,
	\\
	\label{equ:Bh error decomp}
	&\bff{B}_h^n-\bff{B}^n= (\bff{B}_h^n-\Pi_h^{\bb{RT}} \bff{B}^n) + (	\Pi_h^{\bb{RT}} \bff{B}^n- \bff{B}^n) =: \thetab^n + \rhob^n,
	\\
	\label{equ:Eh error decomp}
	&\bff{E}_h^n-\bff{E}^n= (\bff{E}_h^n-\mathcal{I}_h^{\bb{X}}\bff{E}^n) + (	\mathcal{I}_h^{\bb{X}}\bff{E}^n- \bff{E}^n) =: \thetae^n + \rhoe^n,
	\\
	\label{equ:Jh error decomp}
	&\bff{J}_h^n-\bff{J}^n= (\bff{J}_h^n-\Pi_h^{\bb{X}}\bff{J}^n) + (\Pi_h^{\bb{X}}\bff{J}^n- \bff{J}^n) =: \thetaj^n + \rhoj^n,
\end{align}
where $\mathcal{S}_h$ is the Stokes projection defined in \eqref{equ:stokes proj}, $\mathcal{I}_h^{\bb{X}}$ is the quasi-interpolator satisfying the commutative diagram \eqref{equ:comm}, while $\Pi_h^{\bb{RT}}$ and $\Pi_h^{\bb{X}}$ are the orthogonal projections defined in~\eqref{equ:Pi h L2}. Consequently, we note that
\begin{align}\label{equ:Pi zero}
	\inpro{\rhob^n}{\bff{\psi}_h}=\inpro{\rhoj^n}{\bff{\chi}_h}=0,
\end{align}
for any $\bff{\psi}_h\in \bb{RT}_h^0$, $\bff{\chi}_h\in \bb{X}_h^0$.

With the above error decompositions, by subtracting \eqref{equ:weak reg mhd} from \eqref{equ:fem reg mhd}, noting \eqref{equ:stokes proj} and \eqref{equ:Pi zero}, we have the following key error equations:
\begin{subequations}\label{equ:error equ}
	\begin{alignat}{2}
		&\inpro{\dtt \thetau^n+\dtt \rhou^n+\dtt \bff{u}^n-\partial_t \bff{u}^n}{\bff{\phi}_h} 
		\; &&
		\nonumber\\
		&\;
		+ \alpha_1 \inpro{\nabla\dtt \thetau^n+\nabla\dtt \rhou^n+ \nabla\dtt \bff{u}^n-\nabla\partial_t \bff{u}^n}{\nabla \bff{\phi}_h} 
		\; &&
		\nonumber\\
		&\;		
		+ \nu \inpro{\nabla \thetau^n}{\nabla \bff{\phi}_h}
		+ \frac12 \left[ \inpro{(\bff{u}_h^{n-1}\cdot\nabla)(\thetau^n+\rhou^n)}{\bff{\phi}_h}\right] 
		\; &&
		\nonumber\\
		&\;
		+
		\frac12 \left[\inpro{\big( (\thetau^{n-1}+\rhou^{n-1}+\bff{u}^{n-1}-\bff{u}^n)\cdot \nabla\big) \bff{u}^n}{\bff{\phi}_h}\right]
		\; &&
		\nonumber\\
		&\;
		-
		\frac12 \left[ \inpro{(\bff{u}_h^{n-1}\cdot\nabla) \bff{\phi}_h}{\thetau^n+\rhou^n} \right] 
		\; &&
		\nonumber\\
		&\;
		+ \frac12 \left[\inpro{\big( (\thetau^{n-1}+\rhou^{n-1}+\bff{u}^{n-1}-\bff{u}^n)\cdot \nabla\big) \bff{\phi}_h}{\bff{u}^n} \right] 
		\; && 
		\nonumber\\
		&\; 
		- \inpro{\thetap^n}{\divg \bff{\phi}_h} 
		- \inpro{(\thetaj^n+\rhoj^n)\times \bff{B}_h^{n-1}}{\bff{\phi}_h}
		\; &&
		\nonumber\\
		&\;
		- \inpro{\bff{J}^n\times (\thetab^{n-1}+\rhob^{n-1} +\bff{B}^{n-1}-\bff{B}^n)}{\bff{\phi}_h}
		= 
		0,
		\; && \; \forall \bff{\phi}_h\in \bb{V}_{h}^0,
		\label{equ:error equ a}
		\\[1ex]
		&\inpro{\dtt \thetab^n+ \dtt\bff{B}^n- \partial_t \bff{B}^n}{\bff{\psi}_h}
		+ \inpro{\curl \thetae^n+ \curl \rhoe^n}{\bff{\psi}_h} = 0,
		\; && \; \forall \bff{\psi}_h \in \bb{RT}_h^0,
		\label{equ:error equ b}
		\\[1ex]
		&\alpha_2 \inpro{\dtt \thetaj^n +\dtt \rhoj^n+ \dtt \bff{J}^n- \partial_t \bff{J}^n}{\bff{\chi}_h}
		+ \sigma\inpro{\thetaj^n+ \rhoj^n}{\bff{\chi}_h}
		\; &&
		\nonumber\\
		&\;
		+ \eta \inpro{\bff{J}_h^n\times (\thetab^{n-1}+\rhob^{n-1}+\bff{B}^{n-1}-\bff{B}^n)}{\bff{\chi}_h}
		\; &&
		\nonumber\\
		&\;
		+ \eta \inpro{(\thetaj^n+\rhoj^n) \times \bff{B}^n}{\bff{\chi}_h}
		\; &&
		\nonumber\\
		&
		= \inpro{\thetae^n+\rhoe^n}{\bff{\chi}_h}
		+ \inpro{\bff{u}_h^n\times (\thetab^{n-1}+\rhob^{n-1}+\bff{B}^{n-1}-\bff{B}^n)}{\bff{\chi}_h}
		\; &&
		\nonumber\\
		&\;
		+ \inpro{(\thetau^n+\rhou^n)\times \bff{B}^n}{\bff{\chi}_h},
		\; && \;\forall \bff{\chi}_h \in \bb{X}_h^0,
		\label{equ:error equ c}
		\\[1ex]
		&\inpro{\thetaj^n+\rhoj^n}{\bff{\omega}_h}
		-\inpro{\thetab^n+\rhob^n}{\curl \bff{\omega}_h}
		= 0,
		\; && \;\forall \bff{\omega}_h \in \bb{X}_h^0,
		\label{equ:error equ d}
		\\[1ex]
		&\inpro{\divg\thetau^n}{q_h} = 0,
		\; && \;\forall q_h\in Q_h.
		\label{equ:error equ e}
	\end{alignat}
\end{subequations}

Using the above error equations, we now prove an auxiliary error estimate that will play a key role in the proof of the main theorem. In the proof, we often use the following standard inequalities, which follow from Taylor's theorem: If $\bff{v}\in W^{1,\infty}_T(\bb{L}^p)$ for some $p\in [1,\infty]$, then 
\begin{align}\label{equ:dtt vn Lp}
	\norm{\dtt \bff{v}^n}{\bb{L}^p} &\leq \frac{1}{\tau} \int_{t_{n-1}}^{t_n} \norm{\pa_t \bff{v}(s)}{\bb{L}^p} \ds 
	\leq \norm{\bff{v}}{W^{1,\infty}_T(\bb{L}^p)}.
\end{align}
If $\bff{v}\in H^2_T(\bb{L}^p)$, then
\begin{align}\label{equ:dtt vn min pa t}
	\tau \sum_{m=1}^n \norm{\dtt \bff{v}^m- \pa_t \bff{v}^m}{\bb{L}^p}^2
	&\leq
	C\tau^2 \norm{\partial_{tt} \bff{v}}{L^2_T(\bb{L}^p)}^2.
\end{align}

\begin{proposition}\label{pro:aux}
	Let $(\bff{u}_h^n, p_h^n, \bff{B}_h^n, \bff{E}_h^n, \bff{J}_h^n)$ be given by Algorithm~\ref{alg:linear fem mhd}, and let $(\bff{u}, p ,\bff{B},\bff{E}, \bff{J})$ be the solution of \eqref{equ:weak reg mhd} with regularity given by \eqref{equ:assum}. For $n\in \{1,2,\ldots,\lfloor T/\tau \rfloor\}$,
	\begin{align}\label{equ:err aux}
		&\norm{\thetau^n}{\bb{L}^2}^2 + \alpha_1 \norm{\nabla \thetau}{\bb{L}^2}^2 + \norm{\thetab^n}{\bb{L}^2}^2 + 
		\alpha_2 \norm{\thetaj^n}{\bb{L}^2}^2
		\nonumber\\
		&\qquad
		+
		\tau \sum_{j=1}^n \left(\norm{\nabla \thetau^j}{\bb{L}^2}^2 + \norm{\thetaj^n}{\bb{L}^2}^2 \right)
		\leq
		C(h^{2s}+\tau^2).
	\end{align}
	If $\sigma>0$, then the constant $C$ depends on $T$ and possibly on $\alpha_1,\alpha_2$, but is independent of $n$, $h$, $\tau$, and remains bounded as $\alpha_1,\alpha_2\to 0$. If $\sigma=0$, then \eqref{equ:err aux} remains valid, with $C$ depending on $T$ and $\alpha_2^{-1}$.
\end{proposition}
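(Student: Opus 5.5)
We will prove the estimate by a discrete energy argument on the error equations \eqref{equ:error equ}, mimicking the proof of the energy law \eqref{equ:ener diss}. We take $\bff{\phi}_h=\thetau^n$ in \eqref{equ:error equ a}, $\bff{\psi}_h=\thetab^n$ in \eqref{equ:error equ b}, $\bff{\chi}_h=\thetaj^n$ in \eqref{equ:error equ c}, $\bff{\omega}_h=\thetae^n$ in \eqref{equ:error equ d}, and $q_h=\thetap^n$ in \eqref{equ:error equ e}, and sum. The identity $2\inpro{a-b}{a}=\|a\|^2-\|b\|^2+\|a-b\|^2$ produces the discrete time derivative of
\[
\norm{\thetau^n}{\bb{L}^2}^2+\alpha_1\norm{\nabla\thetau^n}{\bb{L}^2}^2+\norm{\thetab^n}{\bb{L}^2}^2+\alpha_2\norm{\thetaj^n}{\bb{L}^2}^2,
\]
together with the dissipation $\nu\norm{\nabla\thetau^n}{\bb{L}^2}^2+\sigma\norm{\thetaj^n}{\bb{L}^2}^2$. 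Several terms cancel exactly:
\begin{itemize}
\item The pressure term vanishes by \eqref{equ:error equ e}.
\item The convective terms with $\bff{u}_h^{n-1}$ acting on $\thetau^n$ cancel by skew-symmetry.
\item $\inpro{\thetae^n}{\thetaj^n}$ cancels against $\inpro{\curl\thetae^n}{\thetab^n}$. This uses \eqref{equ:error equ d}: $\inpro{\thetaj^n}{\thetae^n}=\inpro{\thetab^n}{\curl\thetae^n}$, because $\inpro{\rhoj^n}{\thetae^n}=0$ and $\inpro{\rhob^n}{\curl\thetae^n}=0$ by \eqref{equ:Pi zero}.
\item The Lorentz term $\inpro{\thetaj^n\times\bff{B}_h^{n-1}}{\thetau^n}$ cancels with its counterpart in Ohm's law. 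This requires rewriting \eqref{equ:error equ c} so that the term $\bff{u}_h^n\times\bff{B}_h^{n-1}$ is split as $\thetau^n\times\bff{B}_h^{n-1}$ plus remainders. The splitting must be chosen consistently with the momentum splitting $(\thetaj^n+\rhoj^n)\times\bff{B}_h^{n-1}$.
\item The Hall term $\eta\inpro{\thetaj^n\times\bff{B}_h^{n-1}}{\thetaj^n}=0$, provided the Hall nonlinearity is similarly re-split with $\bff{B}_h^{n-1}$ rather than $\bff{B}^n$ multiplying $\thetaj^n$. Otherwise $\eta\inpro{\thetaj^n\times\bff{B}^n}{\thetaj^n}=0$ pointwise anyway.
\end{itemize}

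The remaining terms are consistency and projection errors, and we bound each by H\"older, Young, and \eqref{equ:assum}.
\begin{itemize}
\item The time-truncation terms $\dtt\bff{v}^n-\pa_t\bff{v}^n$ are handled by \eqref{equ:dtt vn min pa t} and give $O(\tau^2)$.
\item The projection terms $\rhou,\rhob,\rhoj,\rhoe$ are handled by \eqref{equ:Stokes approx}, \eqref{equ:PiRT approx}, \eqref{equ:PiX approx}, and \eqref{equ:interp ned approx}, and give $O(h^{2s})$. The term $\inpro{\curl\rhoe^n}{\thetab^n}$ is treated via the commuting diagram \eqref{equ:comm}: $\curl\mathcal{I}_h^{\bb{X}}\bff{E}=\mathcal{I}_h^{\bb{RT}}\curl\bff{E}$, whose error is $O(h^s)$ in $\bb{L}^2$ because $\bff{E}\in\bb{H}^{1+s}$.
\item Products involving exact solutions, such as $\inpro{\bff{J}^n\times\thetab^{n-1}}{\thetau^n}$ and $\inpro{\thetau^n\times\bff{B}^n}{\thetaj^n}$, use $\bff{u},\bff{B},\bff{J}\in L^\infty(\bb{L}^\infty)$, which follows from $\bb{H}^{1+s}$ with $s>0$ only after a Sobolev embedding. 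If $s$ is small, we instead use $\bb{L}^3$ or $\bb{L}^6$ combined with $\norm{\nabla\thetau}{\bb{L}^2}$. These give $C(\norm{\thetau}{}^2+\norm{\thetab}{}^2)+\epsilon\nu\norm{\nabla\thetau^n}{}^2+\epsilon\sigma\norm{\thetaj^n}{}^2$.
\end{itemize}
After multiplying by $\tau$, summing, and applying the discrete Gronwall lemma (valid for $\tau$ small), we obtain \eqref{equ:err aux}. Initial errors are $O(h^{2s})$ by the choice of $\bff{u}_h^0$ and $\bff{B}_h^0$, and because $\bff{J}_h^0=\curlh\bff{B}_h^0$.

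The main obstacle is the set of terms in which discrete quantities multiply errors, for instance $\inpro{(\thetaj^n+\rhoj^n)\times\bff{B}_h^{n-1}}{\thetau^n}$ after the cancellation, $\inpro{\rhoj^n\times\bff{B}_h^{n-1}}{\thetau^n}$, $\eta\inpro{\bff{J}_h^n\times\thetab^{n-1}}{\thetaj^n}$, and $\inpro{\bff{u}_h^n\times\thetab^{n-1}}{\thetaj^n}$. No uniform $\bb{L}^\infty$ bound on $\bff{B}_h$ or $\bff{J}_h$ is available. We handle them by writing $\bff{B}_h^{n-1}=\thetab^{n-1}+\Pi_h^{\bb{RT}}\bff{B}^{n-1}$ and $\bff{J}_h^n=\thetaj^n+\Pi_h^{\bb{X}}\bff{J}^n$. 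The projected parts are $\bb{L}^\infty$-stable by \eqref{equ:proj Pi stab}, so they behave like the exact-solution terms. The genuinely cubic parts, such as $\eta\inpro{\thetaj^n\times\thetab^{n-1}}{\thetaj^n}$, vanish pointwise. Others, such as $\inpro{\thetaj^n\times\thetab^{n-1}}{\thetau^n}$, we bound by $\norm{\thetaj^n}{\bb{L}^2}\norm{\thetab^{n-1}}{\bb{L}^3}\norm{\thetau^n}{\bb{L}^6}$. We then use \eqref{equ:stab B L3} and the Gaffney inequality \eqref{equ:gaffney} to control $\norm{\thetab^{n-1}}{\bb{L}^3}$ by $\norm{\thetaj^{n-1}}{\bb{L}^2}$ plus a projection error. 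The resulting $\norm{\thetaj}{}^2$ contributions are absorbed as follows:
\begin{itemize}
\item When $\sigma>0$, they are absorbed into the dissipation $\sigma\norm{\thetaj^n}{}^2$, with constants uniform as $\alpha_1,\alpha_2\to0$.
\item When $\sigma=0$, they are absorbed only into the Gronwall energy term $\alpha_2\norm{\thetaj}{}^2$, which produces the factor $\alpha_2^{-1}$.
\end{itemize}
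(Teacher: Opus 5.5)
Your skeleton matches the paper's proof. You use the same test functions and the same exact cancellations: pressure, skew-symmetric convection, $\inpro{\thetaj^n}{\thetae^n}=\inpro{\thetab^n}{\curl\thetae^n}$, and the Lorentz/Ohm coupling once both are split with $\bff{B}_h^{n-1}$, as the paper does in \eqref{equ:err sub J}. You also use the same consistency and projection bounds, the same treatment of the Hall remainder via $\bff{J}_h^n=\thetaj^n+\Pi_h^{\bb{X}}\bff{J}^n$, and the same $\sigma>0$ versus $\sigma=0$ dichotomy.

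Where you go wrong is the ``main obstacle'' paragraph. With the consistent splitting you yourself prescribe, the whole of $\inpro{\thetaj^n\times\bff{B}_h^{n-1}}{\thetau^n}$, including its piece $\inpro{\thetaj^n\times\thetab^{n-1}}{\thetau^n}$, cancels against $\inpro{\thetau^n\times\bff{B}_h^{n-1}}{\thetaj^n}$. Ohm's law then leaves $\bff{u}^n\times(\bff{B}_h^{n-1}-\bff{B}^n)$, not $\bff{u}_h^n\times\thetab^{n-1}$.

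After this, $\bff{B}_h^{n-1}$ only multiplies projection errors, in $\inpro{\rhoj^n\times\bff{B}_h^{n-1}}{\thetau^n}$ and $\inpro{\rhou^n\times\bff{B}_h^{n-1}}{\thetaj^n}$. The paper simply bounds $\norm{\bff{B}_h^{n-1}}{\bb{L}^3}$ by \eqref{equ:stab B L3}: summably in time via $\tau\sigma\sum_j$ when $\sigma>0$, and by $\norm{\bff{B}_h^{n-1}}{\bb{L}^3}^2\le C\alpha_2^{-1}$ when $\sigma=0$. So no decomposition $\bff{B}_h^{n-1}=\thetab^{n-1}+\Pi_h^{\bb{RT}}\bff{B}^{n-1}$ is needed. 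That spares you an $\bb{L}^\infty$-stability of $\Pi_h^{\bb{RT}}$, which is not among the available tools (only $\Pi_h^{\bb{X}}$'s is). It also spares you Gaffney applied to $\thetab^{n-1}$, whose divergence $-\divg\Pi_h^{\bb{RT}}\bff{B}^{n-1}$ does not vanish.

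This is more than cosmetic, because your proposed treatment of that term would fail if the term were present. The bound $\norm{\thetaj^n}{\bb{L}^2}\norm{\thetab^{n-1}}{\bb{L}^3}\norm{\nabla\thetau^n}{\bb{L}^2}$ gives, after Young, $C\norm{\thetab^{n-1}}{\bb{L}^3}^2\norm{\thetaj^n}{\bb{L}^2}^2$. Its coefficient is not small: \eqref{equ:stab B L3} only bounds it by $C\alpha_2^{-1}$, or summably in time when $\sigma>0$. So it cannot be absorbed into $\sigma\norm{\thetaj^n}{\bb{L}^2}^2$. You would have to run Gronwall on $\alpha_2\norm{\thetaj^n}{\bb{L}^2}^2$, losing uniformity as $\alpha_2\to0$. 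Trading $\norm{\thetab^{n-1}}{\bb{L}^3}$ for $\norm{\thetaj^{n-1}}{\bb{L}^2}$ via Gaffney instead makes the term cubic in the errors, which a linear discrete Gronwall argument cannot close.

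Two smaller points. First, your small-$s$ fallback (using $\bb{L}^3$/$\bb{L}^6$ norms with $\norm{\nabla\thetau^n}{\bb{L}^2}$) only helps for terms tested with $\thetau^n$. In $\inpro{\bff{u}^n\times\thetab^{n-1}}{\thetaj^n}$ and $\eta\inpro{\Pi_h^{\bb{X}}\bff{J}^n\times\thetab^{n-1}}{\thetaj^n}$, both error factors are controlled only in $\bb{L}^2$. There an $\bb{L}^\infty$ bound on $\bff{u}^n$ and $\bff{J}^n$ is genuinely used; the paper uses it tacitly, and in 3D the embedding $\bb{H}^{1+s}\subset\bb{L}^\infty$ needs $s>1/2$.

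Second, your one-line justification of the initial error for $\thetaj^0$ is insufficient. By the weak Amp\`ere law, $\curlh\Pi_h^{\bb{RT}}\bff{B}_0=\Pi_h^{\bb{X}}\bff{J}(0)$, so $\thetaj^0=\curlh(\mathcal{I}_h^{\bb{RT}}\bff{B}_0-\Pi_h^{\bb{RT}}\bff{B}_0)$. An inverse estimate on this $O(h)$ difference only yields $\norm{\thetaj^0}{\bb{L}^2}=O(1)$. The paper's proof is silent here too. Since you assert the bound explicitly, you need either a superconvergence argument or to carry $\alpha_2\norm{\thetaj^0}{\bb{L}^2}^2$ on the right-hand side.
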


\begin{proof}
	First, we choose $\bff{\phi}_h= \thetau^n$ and invoke \eqref{equ:stokes proj} to obtain, after suitable cancellations,
	\begin{align}\label{equ:err sub u}
		&\frac{1}{2\tau} \left(\norm{\thetau^n}{\bb{L}^2}^2- \norm{\thetau^{n-1}}{\bb{L}^2}^2 \right)
		+
		\frac{1}{2\tau} \norm{\thetau^n-\thetau^{n-1}}{\bb{L}^2}^2
		\nonumber\\
		&\quad
		+
		\frac{\alpha_1}{2\tau} \left(\norm{\nabla \thetau^n}{\bb{L}^2}^2- \norm{\nabla \thetau^{n-1}}{\bb{L}^2}^2 \right)
		+
		\frac{\alpha_1}{2\tau} \norm{\nabla \thetau^n- \nabla \thetau^{n-1}}{\bb{L}^2}^2
		+
		\nu \norm{\nabla \thetau^n}{\bb{L}^2}^2
		\nonumber\\
		&=
		-\inpro{\dtt \rhou^n}{\thetau^n}
		- \inpro{\dtt \bff{u}^n-\pa_t\bff{u}^n}{\thetau^n}
		\nonumber\\
		&\quad
		- \alpha_1\inpro{\nabla\dtt \rhou^n}{\nabla\thetau^n}
		- \alpha_1 \inpro{\nabla\dtt \bff{u}^n- \nabla\pa_t\bff{u}^n}{\nabla\thetau^n}
		\nonumber\\
		&\quad
		-
		\frac12 \Big[\inpro{(\bff{u}_h^{n-1}\cdot\nabla) \rhou^n}{\thetau^n}
		-
		\inpro{(\bff{u}_h^{n-1}\cdot\nabla)\thetau^n}{\rhou^n} \Big] 
		\nonumber\\
		&\quad
		-
		\frac12 \Big[ \inpro{\big( (\thetau^{n-1}+\rhou^{n-1}+\bff{u}^{n-1}-\bff{u}^n)\cdot \nabla\big) \bff{u}^n}{\thetau^n} \Big] 
		\nonumber\\
		&\quad
		- 
		\frac12 \Big[
		\inpro{\big( (\thetau^{n-1}+\rhou^{n-1}+\bff{u}^{n-1}-\bff{u}^n)\cdot \nabla\big) \thetau^n}{\bff{u}^n}\Big]
		\nonumber\\
		&\quad
		+ \inpro{\thetap^n}{\divg \thetau^n} 
		+ \inpro{(\thetaj^n+\rhoj^n)\times \bff{B}_h^{n-1}}{\thetau^n}
		\nonumber\\
		&\quad
		+ \inpro{\bff{J}^n\times (\thetab^{n-1}+\rhob^{n-1} +\bff{B}^{n-1}-\bff{B}^n)}{\thetau^n}.
	\end{align}
	Second, we set $\bff{\psi}_h=\thetab^n$ to have
	\begin{align}\label{equ:err sub B}
		&\frac{1}{2\tau} \left(\norm{\thetab^n}{\bb{L}^2}^2- \norm{\thetab^{n-1}}{\bb{L}^2}^2 \right)
		+
		\frac{1}{2\tau} \norm{\thetab^n-\thetab^{n-1}}{\bb{L}^2}^2
		\nonumber\\
		&=
		-\inpro{\dtt \bff{B}^n-\pa_t \bff{B}^n}{\thetab^n}
		-\inpro{\curl\thetae^n + \curl \rhoe^n}{\thetab^n}.
	\end{align}
	Next, we put $\bff{\chi}_h=\thetaj^n$ and note the identity $(\bff{a}\times \bff{b})\cdot \bff{a}=0$ for any $\bff{a},\bff{b}\in \bb{R}^3$ to obtain
	\begin{align}\label{equ:err sub J}
		&\frac{\alpha_2}{2\tau} \left(\norm{\thetaj^n}{\bb{L}^2}^2- \norm{\thetaj^{n-1}}{\bb{L}^2}^2 \right)
		+
		\frac{\alpha_2}{2\tau} \norm{\thetaj^n-\thetaj^{n-1}}{\bb{L}^2}^2
		+
		\sigma \norm{\thetaj^n}{\bb{L}^2}^2
		\nonumber\\
		&=
		-\alpha_2 \inpro{\dtt \bff{J}^n-\pa_t \bff{J}^n}{\thetaj^n}
		+
		\inpro{\thetae^n+\rhoe^n}{\thetaj^n}
		\nonumber\\
		&\quad
		- \eta \inpro{\bff{J}_h^n\times (\thetab^{n-1}+\rhob^{n-1}+\bff{B}^{n-1}-\bff{B}^n)}{\thetaj^n}
		- \eta \inpro{\rhoj^n \times \bff{B}^n}{\thetaj^n}
		\nonumber\\
		&\quad
		+
		\inpro{(\thetau^n+\rhou^n)\times \bff{B}_h^{n-1}}{\thetaj^n}
		+
		\inpro{\bff{u}^n\times (\thetab^{n-1}+\rhob^{n-1}+\bff{B}^{n-1}-\bff{B}^n)}{\thetaj^n}.
	\end{align}
	Finally, setting $\bff{\omega}_h = \thetae^n$, we obtain
	\begin{align}\label{equ:err sub E}
		\inpro{\thetaj^n}{\thetae^n}
		&=
		\inpro{\thetab^n}{\curl \thetae^n},
	\end{align}
	where we used \eqref{equ:Pi zero} and the fact that $\curl \thetae^n \in \bb{RT}_h^0$.
	On the other hand, setting $q_h = \thetap^n$ and noting~\eqref{equ:stokes proj}, we have
	\begin{align}\label{equ:err sub p}
		\inpro{\divg \thetau^n}{\thetap^n} = 0.
	\end{align}
	We now add \eqref{equ:err sub u}, \eqref{equ:err sub B}, \eqref{equ:err sub J}, \eqref{equ:err sub E}, and \eqref{equ:err sub p} to obtain, after appropriate cancellations,
	\begin{align}\label{equ:sub to est}
		&\frac{1}{2\tau} \left(\norm{\thetau^n}{\bb{L}^2}^2- \norm{\thetau^{n-1}}{\bb{L}^2}^2 \right)
		+
		\frac{1}{2\tau} \norm{\thetau^n-\thetau^{n-1}}{\bb{L}^2}^2
		\nonumber\\
		&\quad
		+
		\frac{\alpha_1}{2\tau} \left(\norm{\nabla \thetau^n}{\bb{L}^2}^2- \norm{\nabla \thetau^{n-1}}{\bb{L}^2}^2 \right)
		+
		\frac{\alpha_1}{2\tau} \norm{\nabla \thetau^n- \nabla \thetau^{n-1}}{\bb{L}^2}^2
		+
		\nu \norm{\nabla \thetau^n}{\bb{L}^2}^2
		\nonumber\\
		&\quad
		+
		\frac{1}{2\tau} \left(\norm{\thetab^n}{\bb{L}^2}^2- \norm{\thetab^{n-1}}{\bb{L}^2}^2 \right)
		+
		\frac{1}{2\tau} \norm{\thetab^n-\thetab^{n-1}}{\bb{L}^2}^2
		\nonumber\\
		&\quad
		+
		\frac{\alpha_2}{2\tau} \left(\norm{\thetaj^n}{\bb{L}^2}^2- \norm{\thetaj^{n-1}}{\bb{L}^2}^2 \right)
		+
		\frac{\alpha_2}{2\tau} \norm{\thetaj^n-\thetaj^{n-1}}{\bb{L}^2}^2
		+
		\sigma \norm{\thetaj^n}{\bb{L}^2}^2
		\nonumber\\
		&=
		-\inpro{\dtt \rhou^n}{\thetau^n}
		- \inpro{\dtt \bff{u}^n-\pa_t\bff{u}^n}{\thetau^n}
		\nonumber\\
		&\quad
		- \alpha_1\inpro{\nabla\dtt \rhou^n}{\nabla\thetau^n}
		- \alpha_1 \inpro{\nabla\dtt \bff{u}^n- \nabla\pa_t\bff{u}^n}{\nabla\thetau^n}
		\nonumber\\
		&\quad
		-
		\frac12 \Big[\inpro{(\bff{u}_h^{n-1}\cdot\nabla) \rhou^n}{\thetau^n}
		-
		\inpro{(\bff{u}_h^{n-1}\cdot\nabla)\thetau^n}{\rhou^n} \Big] 
		\nonumber\\
		&\quad
		-
		\frac12 \Big[ \inpro{\big( (\thetau^{n-1}+\rhou^{n-1}+\bff{u}^{n-1}-\bff{u}^n)\cdot \nabla\big) \bff{u}^n}{\thetau^n} \Big] 
		\nonumber\\
		&\quad
		-
		\frac12 \Big[
		\inpro{\big( (\thetau^{n-1}+\rhou^{n-1}+\bff{u}^{n-1}-\bff{u}^n)\cdot \nabla\big) \thetau^n}{\bff{u}^n}\Big]
		\nonumber\\
		&\quad
		+ \inpro{\rhoj^n\times \bff{B}_h^{n-1}}{\thetau^n}
		+ \inpro{\bff{J}^n\times (\thetab^{n-1}+\rhob^{n-1} +\bff{B}^{n-1}-\bff{B}^n)}{\thetau^n}
		\nonumber\\
		&\quad
		-\inpro{\dtt \bff{B}^n-\pa_t \bff{B}^n}{\thetab^n}
		-\inpro{\curl \rhoe^n}{\thetab^n}
		\nonumber\\
		&\quad
		-\alpha_2 \inpro{\dtt \bff{J}^n-\pa_t \bff{J}^n}{\thetaj^n}
		+
		\inpro{\rhoe^n}{\thetaj^n}
		\nonumber\\
		&\quad
		- \eta \inpro{\bff{J}_h^n\times (\thetab^{n-1}+\rhob^{n-1}+\bff{B}^{n-1}-\bff{B}^n)}{\thetaj^n}
		- \eta \inpro{\rhoj^n \times \bff{B}^n}{\thetaj^n}
		\nonumber\\
		&\quad
		+
		\inpro{\rhou^n\times \bff{B}_h^{n-1}}{\thetaj^n}
		+
		\inpro{\bff{u}^n\times (\thetab^{n-1}+\rhob^{n-1}+\bff{B}^{n-1}-\bff{B}^n)}{\thetaj^n}
		\nonumber\\
		&=: I_1+I_2+\ldots+I_{16}.
	\end{align}
	It remains to estimate each term $I_j$, $j=1,2,\ldots, 16$, on the last line. Let $\epsilon>0$ be a sufficiently small number to be fixed later. First, the terms $I_1$ to $I_4$ are bounded using \eqref{equ:dtt vn Lp}, \eqref{equ:Stokes approx}, and Young's inequality to obtain
	\begin{align*}
		\abs{I_1}
		&\leq
		Ch^{2s}+ \epsilon \norm{\thetau^n}{\bb{L}^2}^2,
		\\
		\abs{I_2}
		&\leq
		C \norm{\dtt \bff{u}^n- \pa_t \bff{u}^n}{\bb{L}^2}^2
		+
		\epsilon \norm{\thetau^n}{\bb{L}^2}^2,
		\\
		\abs{I_3}
		&\leq
		C\alpha_1^2 h^{2s}+ \frac{\nu}{4} \norm{\nabla \thetau^n}{\bb{L}^2}^2,
		\\
		\abs{I_4}
		&\leq
		C\alpha_1^2 \norm{\nabla\dtt \bff{u}^n- \nabla\pa_t \bff{u}^n}{\bb{L}^2}^2
		+
		\frac{\nu}{4} \norm{\nabla \thetau^n}{\bb{L}^2}^2.
	\end{align*}
	For $I_5$, we employ Young's inequality and the Gagliardo--Nirenberg inequality to obtain
	\begin{align*}
		\abs{I_5}
		&\leq
		\frac12 \norm{\bff{u}_h^{n-1}}{\bb{L}^6} \norm{\nabla\rhou^n}{\bb{L}^2}
		\norm{\thetau^n}{\bb{L}^3}
		+
		\frac12 \norm{\bff{u}_h^{n-1}}{\bb{L}^6}
		\norm{\nabla \thetau^n}{\bb{L}^2}
		\norm{\rhou^n}{\bb{L}^3}
		\\
		&\leq
		Ch^{2s} \norm{\nabla \bff{u}_h^{n-1}}{\bb{L}^2}^2 +
		\epsilon \norm{\nabla \thetau^n}{\bb{L}^2}^2,
	\end{align*}
	where in the last step we also used \eqref{equ:Stokes approx}, the Sobolev embedding, and the Poincar\'e inequality. Similarly, for $I_6$, by \eqref{equ:dtt vn Lp}, H\"older's and Young's inequality, noting \eqref{equ:Stokes approx} and the regularity of the solution, we have
	\begin{align*}
		\abs{I_6}
		&\leq
		\frac12 \big(\norm{\thetau^{n-1}}{\bb{L}^2} + \norm{\rhou^{n-1}}{\bb{L}^2} + \norm{\bff{u}^{n-1}-\bff{u}^n}{\bb{L}^2} \big) 
		\\
		&\qquad 
		\big(\norm{\nabla \bff{u}^n}{\bb{L}^3} \norm{\thetau^n}{\bb{L}^6} + \norm{\nabla \thetau^n}{\bb{L}^2} \norm{\bff{u}^n}{\bb{L}^\infty} \big) 
		\\
		&\leq
		Ch^{2s}+C\tau^2+ C\norm{\thetau^{n-1}}{\bb{L}^2}^2
		+
		\epsilon \norm{\nabla\thetau^n}{\bb{L}^2}^2.
	\end{align*}
	For the term $I_7$, by Young's and Poincar\'e's inequality, and \eqref{equ:PiX approx}, we have
	\begin{align*}
		\abs{I_7}
		&\leq
		C\norm{\rhoj^n}{\bb{L}^2}^2 \norm{\bff{B}_h^{n-1}}{\bb{L}^3}^2 + \epsilon \norm{\thetau}{\bb{L}^6}^2
		\leq
		Ch^{2s} \norm{\bff{B}_h^{n-1}}{\bb{L}^3}^2 + \epsilon \norm{\nabla \thetau}{\bb{L}^2}^2.
	\end{align*}
	Next, for $I_8$, by Young's and Poincar\'e's inequality, \eqref{equ:PiRT approx}, and \eqref{equ:dtt vn Lp}, we have
	\begin{align*}
		\abs{I_8}
		&\leq
		C\norm{\bff{J}^n}{\bb{L}^3}^2 \norm{\thetab^{n-1}+\rhob^{n-1}+ \bff{B}^{n-1}-\bff{B}^n}{\bb{L}^2}^2 + \epsilon \norm{\thetau^n}{\bb{L}^6}^2
		\\
		&\leq
		Ch^{2s}+C\tau^2+ C\norm{\thetab^{n-1}}{\bb{L}^2}^2 + \epsilon \norm{\nabla \thetau^n}{\bb{L}^2}^2.
	\end{align*}
	The terms $I_9$ and $I_{11}$ are estimated in a similar manner as $I_2$, leading to
	\begin{align*}
		\abs{I_9}
		&\leq
		C \norm{\dtt \bff{B}^n- \pa_t \bff{B}^n}{\bb{L}^2}^2
		+
		\epsilon \norm{\thetab^n}{\bb{L}^2}^2,
		\\
		\abs{I_{11}}
		&\leq
		C \alpha_2^2 \norm{\dtt \bff{J}^n- \pa_t \bff{J}^n}{\bb{L}^2}^2
		+
		\frac{\sigma}{4} \norm{\thetaj^n}{\bb{L}^2}^2.
	\end{align*}
	Next, we estimate $I_{10}$. Note that by the commutative diagram property \eqref{equ:comm},
	\begin{align*}
		\curl \rhoe^n= \curl (\mathcal{I}_h^{\bb{X}} \bff{E}^n- \bff{E}^n)= \mathcal{I}_h^{\bb{RT}} (\curl \bff{E}^n)- \curl \bff{E}^n.
	\end{align*}
	Therefore, by \eqref{equ:interp RT approx} and Young's inequality,
	\begin{align*}
		\abs{I_{10}}
		&\leq
		Ch^{2s} + \epsilon\norm{\thetab^n}{\bb{L}^2}^2.
	\end{align*}
	Similarly for $I_{12}$, by \eqref{equ:interp ned approx} and Young's inequality,
	\begin{align*}
		\abs{I_{12}}
		&\leq
		Ch^{2s} + \epsilon\norm{\thetaj^n}{\bb{L}^2}^2.
	\end{align*}
	For $I_{13}$, we write $\bff{J}_h^n=\thetaj^n+ \Pi_h^{\bb{X}} \bff{J}^n$, then apply \eqref{equ:proj Pi stab}, \eqref{equ:PiX approx}, \eqref{equ:dtt vn Lp}, and Young's inequality to obtain
	\begin{align*}
		\abs{I_{13}}
		&\leq
		C \norm{\Pi_h^{\bb{X}} \bff{J}^n}{\bb{L}^\infty} \norm{\thetab^{n-1}+\rhob^{n-1}+\bff{B}^{n-1}-\bff{B}^n}{\bb{L}^2} \norm{\thetaj^n}{\bb{L}^2}
		\\
		&\leq
		Ch^{2s}+C\tau^2 + C\norm{\thetab^{n-1}}{\bb{L}^2}^2 + \epsilon \norm{\thetaj^n}{\bb{L}^2}^2.
	\end{align*}
	For $I_{14}$ and $I_{15}$, by \eqref{equ:PiX approx}, \eqref{equ:Stokes approx}, and Young's inequality, we have
	\begin{align*}
		\abs{I_{14}}+ \abs{I_{15}}
		&\leq
		C\norm{\rhoj^n}{\bb{L}^2} \norm{\bff{B}^n}{\bb{L}^\infty} \norm{\thetaj^n}{\bb{L}^2} + C\norm{\rhou^n}{\bb{L}^6} \norm{\bff{B}_h^{n-1}}{\bb{L}^3} \norm{\thetaj^n}{\bb{L}^2}
		\\
		&\leq
		Ch^{2s} \left(1+ \norm{\bff{B}_h^{n-1}}{\bb{L}^3}^2\right)
		+ \epsilon \norm{\thetaj^n}{\bb{L}^2}^2.
	\end{align*}
	Finally, by a similar argument, we obtain
	\begin{align*}
		\abs{I_{16}}
		&\leq
		Ch^{2s}+C\tau^2 + C\norm{\thetab^{n-1}}{\bb{L}^2}^2 + \epsilon \norm{\thetaj^n}{\bb{L}^2}^2.
	\end{align*}
	We now substitute these estimates back into \eqref{equ:sub to est}. If $\sigma>0$, then we can choose $\epsilon>0$ sufficiently small to absorb the relevant terms, and sum over $m\in \{1,2,\ldots,n\}$. Applying the discrete Gronwall lemma, noting \eqref{equ:stab B L3} and \eqref{equ:dtt vn min pa t}, we obtain~\eqref{equ:err aux}.
	
	If $\sigma=0$, the term $\epsilon \norm{\thetaj^n}{\bb{L}^2}^2$ arising in the estimates of $I_{12}$ up to $I_{16}$ can no longer be absorbed by the dissipative contribution $\sigma \norm{\thetaj^n}{\bb{L}^2}^2$ in~\eqref{equ:sub to est}. Instead, we make use of the term with coefficient $\alpha_2$ on the left-hand side of~\eqref{equ:sub to est} to control $\norm{\thetaj^n}{\bb{L}^2}^2$, and apply the discrete Gronwall lemma with sufficiently small $\tau>0$. Consequently, the constant $C$ in \eqref{equ:err aux} depends on $\alpha_2^{-1}$ in this case. The proof is now complete.
\end{proof}

With the preceding estimates established, we are now in a position to prove our main result, Theorem~\ref{the:main}.

\begin{proof}[Proof of Theorem~\ref{the:main}]
	This follows from Proposition~\ref{pro:aux}, the error decompositions \eqref{equ:uh error decomp}--\eqref{equ:Jh error decomp}, and the triangle inequality.
\end{proof}

\section{Numerical simulations}\label{sec:num sim}

We perform several physically relevant numerical simulations to assess the performance of our scheme.
To verify convergence, we compute a reference solution on a fine mesh and with a small time step. For any variable $\bff{v}$, the error $\mathcal{E}_{s}^{\bff{v}}(h,\tau)$ at a fixed time $T$ is defined by
\[
\mathcal{E}_{s}^{\bff{v}}(h,\tau):= \norm{\bff{v}_h^N- \bff{v}_{\text{ref}}(T)}{\bb{H}^s}, \quad s\in \{0,1\}.
\]
Here, $\bff{v}_h^N$ is the numerical solution with mesh size $h$ at time $T=N\tau$, and $\bff{v}_{\text{ref}}(T)$ denotes the reference solution at the same time $T$.

\subsection{The confined Arnold--Beltrami--Childress flow}\label{subsec:abc}

Fix $\mathscr{D}=[0,1]^3$. We simulate the evolution of a magnetic field embedded in an Arnold--Beltrami--Childress (ABC) flow. This configuration is a prototypical flow where the velocity field $\bff{u}$ is perfectly aligned with its vorticity $\curl \bff{u}$, creating a chaotic system of interlocking helical streamlines.
The initial fluid velocity is set to be 
\[
\bff{u}_0(x,y,z)= \big(\sin(2\pi y)\cos(\pi z), \sin (\pi y)\cos(\pi x), \sin(\pi x)\cos(\pi y)\big).
\]
The initial magnetic field is defined as $\bff{B}_0=\curl \bff{A}_0$, where
\[
\bff{A}_0(x,y,z)= \big(\sin(2\pi y)\sin(\pi z), \sin (2\pi y)\sin(\pi x), \sin(2\pi x)\sin(\pi y)\big).
\]
This verifies $\divg \bff{B}_0=0$ in $\mathscr{D}$ and $\bff{B}_0\cdot \bff{n}=0$ on $\pa \mathscr{D}$.
The parameters of the problem are $\nu=\sigma=0.005$, $\eta=0.1$, and $\alpha_1=\alpha_2=10^{-5}$.

Snapshots of $\bff{u},\bff{B}$, and $\bff{J}$ on two perpendicular slices ($z=0.5$ and $x=0.5$) of $\mathscr{D}$ at selected times are shown in Figure~\ref{fig:snapshots exp 1}. 
As the initial smooth channels of the magnetic field are twisted by the fluid, magnetic tension builds until a tearing instability occurs. This leads to the fragmentation into discrete magnetic islands and flux ropes. The Hall term triggers the formation of patches of negative $B_z$ components between dominant positive $B_z$ islands. In the long term, the system undergoes an inverse energy cascade where smaller islands coalesce into larger structures. The current density and fluid velocity decay as the system reaches a relaxed state. Despite the relatively low resolution ($N=16$), this energy-stable scheme preserves the solenoidal constraint on $\bff{B}$ (up to solver tolerance) and resolves the main features accurately; see Figure~\ref{fig:energy div exp1}.

Finally, we take the reference solution at $T=0.02$ with $h=1/20$ and $\tau=0.005$. Plots of the errors of $\bff{u},\bff{B},\bff{J}$ against $1/h$ to verify spatial orders of convergence at $T=0.02$ are shown in Figures~\ref{fig:conv u 1} and~\ref{fig:conv b j 0}. The observed convergence rates for $\bff{u}$ and $\bff{J}$ are reduced, which may be attributed to limited regularity of the solution. Now, with the same $T$, we take the reference solution with $h=1/20$ and $\tau=0.001$. Plots of $\mathcal{E}_0^{\bff{u}}$ and $\mathcal{E}_0^{\bff{B}}$ against $1/\tau$ are displayed in Figure~\ref{fig:temp conv u b 0}.

\begin{figure}[!htb]
	\centering
	\begin{subfigure}[b]{0.45\textwidth}
		\centering
		\includegraphics[width=\textwidth]{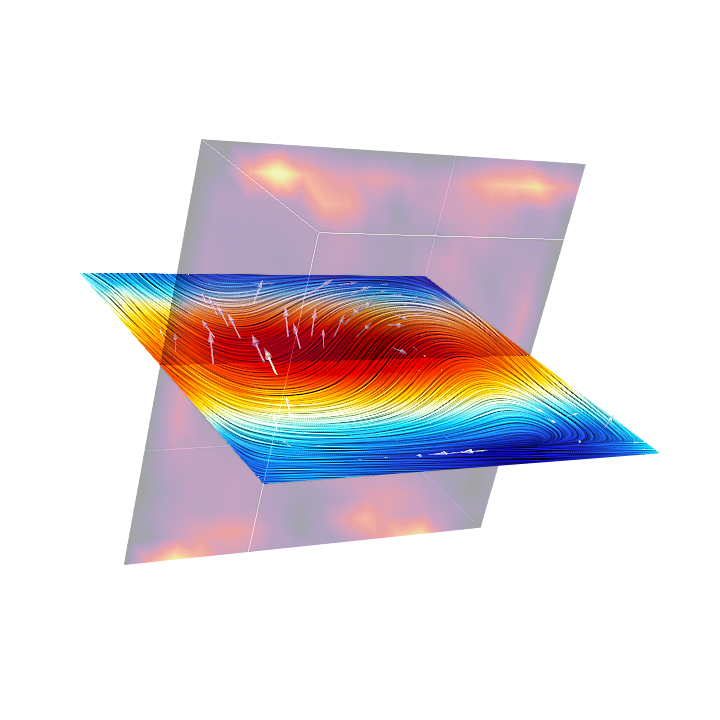}
		\caption{$t=0$}
	\end{subfigure}
	\begin{subfigure}[b]{0.45\textwidth}
		\centering
		\includegraphics[width=\textwidth]{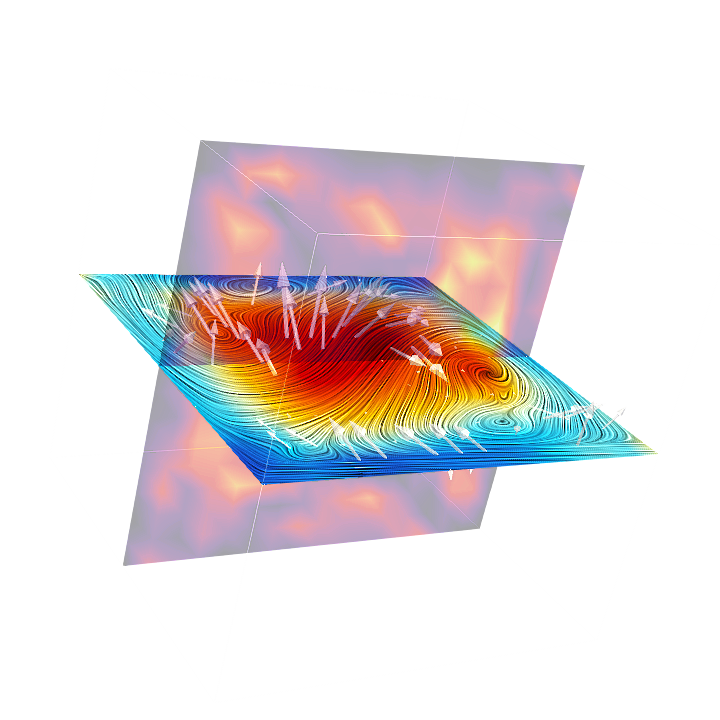}
		\caption{$t=0.05$}
	\end{subfigure}
	\begin{subfigure}[b]{0.07\textwidth}
		\centering
		\includegraphics[width=\textwidth]{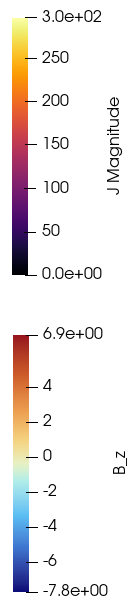}
	\end{subfigure}
	\begin{subfigure}[b]{0.45\textwidth}
		\centering
		\includegraphics[width=\textwidth]{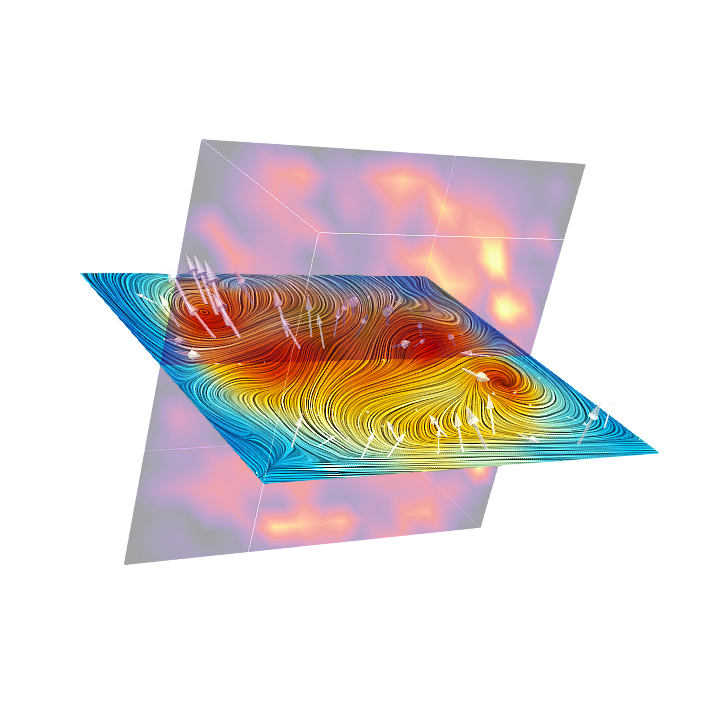}
		\caption{$t=0.1$}
	\end{subfigure}
	\begin{subfigure}[b]{0.45\textwidth}
		\centering
		\includegraphics[width=\textwidth]{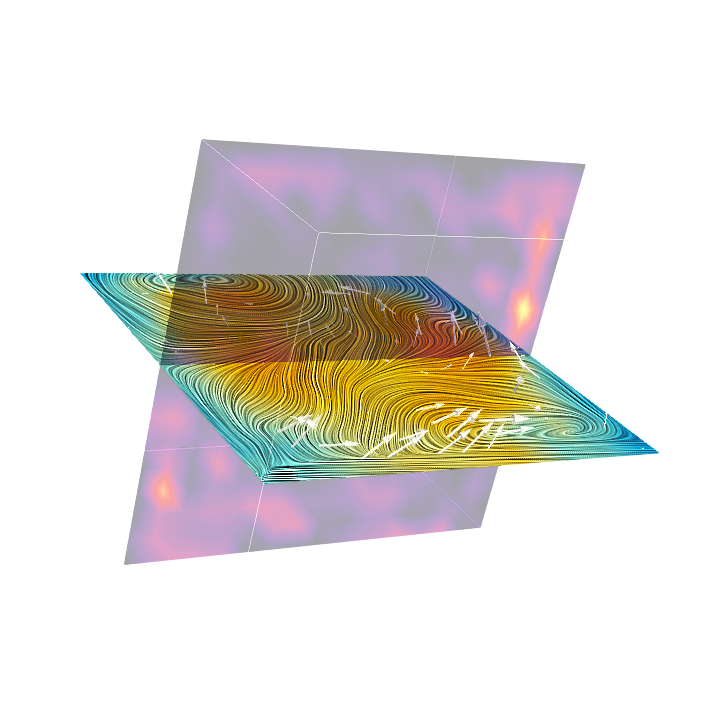}
		\caption{$t=0.2$}
	\end{subfigure}
	\begin{subfigure}[b]{0.07\textwidth}
		\centering
		\includegraphics[width=\textwidth]{exp1_0_legend.png}
	\end{subfigure}
	\caption{Snapshots of the 3D ABC flow simulation at given times. In the horizontal slice ($z=0.5$), background colouring indicates the out-of-plane magnetic field component $B_z$, while overlaid streamlines visualise the in-plane magnetic topology and vectors represent the fluid velocity $\bff{u}$. The vertical slice ($x=0.5$) displays the current density magnitude $|\mathbf{J}|$, identifying localised current ribbons.}
	\label{fig:snapshots exp 1}
\end{figure}

\begin{figure}[!htb]
	\centering
	\begin{subfigure}[b]{0.48\textwidth}
		\centering
		\includegraphics[width=\textwidth]{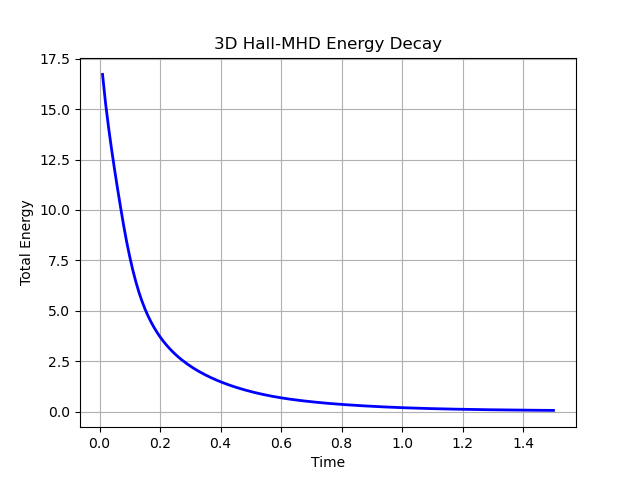}
	\end{subfigure}
	\hspace{1ex}
	\begin{subfigure}[b]{0.48\textwidth}
		\centering
		\includegraphics[width=\textwidth]{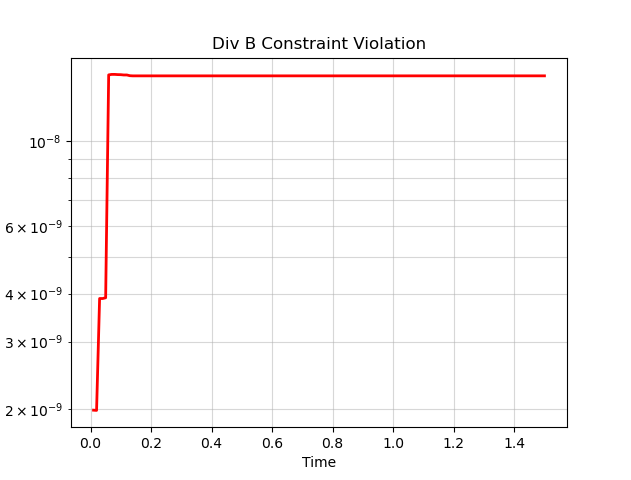}
	\end{subfigure}
	\caption{Energy and $\max_{\mathcal{T}_h} \abs{\divg \bff{B}}$ in Simulation~\ref{subsec:abc} with $h=1/16$ and $\tau=0.01$.}
	\label{fig:energy div exp1}
\end{figure}

\begin{figure}[!htb]
	\begin{subfigure}[b]{0.45\textwidth}
		\centering
		\begin{tikzpicture}
			\begin{axis}[
				title=Plot of $\mathcal{E}_0^{\bff{u}}$ against $1/h$,
				height=0.9\textwidth,
				width=0.9\textwidth,
				xlabel= $1/h$,
				ylabel= $\mathcal{E}_0^{\bff{u}}$,
				xmode=log,
				ymode=log,
				legend pos=south west,
				legend cell align=left,
				]
				\addplot+[mark=*,red] coordinates {(4,1.679)(8,0.8335)(12,0.4889)};
				\addplot+[dashed,no marks,blue,domain=7:12]{5/x};
				\addplot+[dashed,no marks,red,domain=8:12]{80/x^2};
				\legend{\scriptsize{$\mathcal{E}_0^{\bff{u}}(h)$}, \scriptsize{order 1 line}, \scriptsize{order 2 line}}
			\end{axis}
		\end{tikzpicture}
	\end{subfigure}
	\hspace{1ex}
	\begin{subfigure}[b]{0.45\textwidth}
		\centering
		\begin{tikzpicture}
			\begin{axis}[
				title=Plot of $\mathcal{E}_1^{\bff{u}}$ against $1/h$,
				height=0.9\textwidth,
				width=0.9\textwidth,
				xlabel= $1/h$,
				ylabel= $\mathcal{E}_1^{\bff{u}}$,
				xmode=log,
				ymode=log,
				legend pos=south west,
				legend cell align=left,
				]
				\addplot+[mark=*,blue] coordinates {(4,126.8)(8,110.1)(12,77.2)};
				\addplot+[dashed,no marks,blue,domain=7:12]{700/x};
				\legend{\scriptsize{$\mathcal{E}_1^{\bff{u}}(h)$}, \scriptsize{order 1 line}}
			\end{axis}
		\end{tikzpicture}
	\end{subfigure}
	\caption{Spatial convergence orders of $\bff{u}$ in Simulation \ref{subsec:abc}.}
	\label{fig:conv u 0}
\end{figure}

\begin{figure}[!htb]
	\begin{subfigure}[b]{0.45\textwidth}
		\centering
		\begin{tikzpicture}
			\begin{axis}[
				title=Plot of $\mathcal{E}_0^{\bff{B}}$ against $1/h$,
				height=0.9\textwidth,
				width=0.9\textwidth,
				xlabel= $1/h$,
				ylabel= $\mathcal{E}_0^{\bff{B}}$,
				xmode=log,
				ymode=log,
				legend pos=south west,
				legend cell align=left,
				]
				\addplot+[mark=*,blue] coordinates {(4,3.176)(8,1.783)(12,1.313)};
				\addplot+[dashed,no marks,blue,domain=7:12]{18/x};
				\legend{\scriptsize{$\mathcal{E}_0^{\bff{B}}(h)$}, \scriptsize{order 1 line}}
			\end{axis}
		\end{tikzpicture}
	\end{subfigure}
	\hspace{1em}
	\begin{subfigure}[b]{0.45\textwidth}
		\centering
		\begin{tikzpicture}
			\begin{axis}[
				title=Plot of $\mathcal{E}_0^{\bff{J}}$ against $1/h$,
				height=0.9\textwidth,
				width=0.9\textwidth,
				xlabel= $1/h$,
				ylabel= $\mathcal{E}_0^{\bff{J}}$,
				xmode=log,
				ymode=log,
				legend pos=south west,
				legend cell align=left,
				]
				\addplot+[mark=*,blue] coordinates {(4,8.0)(8,7.0)(12,6.7)};
				\addplot+[dashed,no marks,blue,domain=9:10.2]{72/x};
				\legend{\scriptsize{$\mathcal{E}_0^{\bff{J}}(h)$}, \scriptsize{order 1 line}}
			\end{axis}
		\end{tikzpicture}
	\end{subfigure}
	\caption{Spatial convergence orders of $\bff{B}$ and $\bff{J}$ in Simulation \ref{subsec:abc}.}
	\label{fig:conv b j 0}
\end{figure}

\begin{figure}[!htb]
	\begin{subfigure}[b]{0.45\textwidth}
		\centering
		\begin{tikzpicture}
			\begin{axis}[
				title=Plot of $\mathcal{E}_0^{\bff{u}}$ against $1/\tau$,
				height=0.9\textwidth,
				width=0.9\textwidth,
				xlabel= $1/\tau$,
				ylabel= $\mathcal{E}_0^{\bff{u}}$,
				xmode=log,
				ymode=log,
				legend pos=south west,
				legend cell align=left,
				]
				\addplot+[mark=*,blue] coordinates {(50,0.654)(100,0.433)(200,0.274)};
				\addplot+[dashed,no marks,blue,domain=120:200]{64/x};
				\legend{\scriptsize{$\mathcal{E}_0^{\bff{u}}(\tau)$}, \scriptsize{order 1 line}}
			\end{axis}
		\end{tikzpicture}
	\end{subfigure}
	\hspace{1em}
	\begin{subfigure}[b]{0.45\textwidth}
		\centering
		\begin{tikzpicture}
			\begin{axis}[
				title=Plot of $\mathcal{E}_0^{\bff{B}}$ against $1/\tau$,
				height=0.9\textwidth,
				width=0.9\textwidth,
				xlabel= $1/\tau$,
				ylabel= $\mathcal{E}_0^{\bff{B}}$,
				xmode=log,
				ymode=log,
				legend pos=south west,
				legend cell align=left,
				]
				\addplot+[mark=*,blue] coordinates {(50,1.257)(100,0.944)(200,0.647)};
				\addplot+[dashed,no marks,blue,domain=120:190]{140/x};
				\legend{\scriptsize{$\mathcal{E}_0^{\bff{B}}(\tau)$}, \scriptsize{order 1 line}}
			\end{axis}
		\end{tikzpicture}
	\end{subfigure}
	\caption{Temporal convergence orders of $\bff{u}$ and $\bff{B}$ in Simulation \ref{subsec:abc}.}
	\label{fig:temp conv u b 0}
\end{figure}

\subsection{The confined Orszag--Tang vortex}\label{subsec:tang}

We fix $\mathscr{D}=[0,1]\times [0,1]$ and consider the Orszag--Tang-type vortex problem for 2.5D Hall--MHD (in which all quantities are considered in 3D, but there is no dependence on the $z$-coordinate; see Section~\ref{sec:25d} for more details). The initial fluid velocity is set to be
\[
\bff{u}_0(x,y)= (-2.5\sin(2\pi y), 2.5\sin (2\pi x), 0).
\]
The initial magnetic field is defined as follows. Let 
\[
A_0(x,y,z)= \frac{1}{\pi} \sin (\pi x) \sin(\pi y) \left(\frac{1}{4} \cos (4\pi x)+ 2 \cos(2\pi y)\right)
\]
and define $\bff{B}_0=\curl (A_0 \hat{\bff{z}})$. We note that $\divg \bff{B}_0=0$ in $\mathscr{D}$ and $\bff{B}_0 \cdot \bff{n}=0$ on $\pa \mathscr{D}$.
The parameters of the problem are $\nu=\sigma=0.002$, $\eta=0.1$, $\alpha_1=10^{-8}$, and $\alpha_2=10^{-5}$.

Snapshots of $\bff{u}$ and $\bff{B}$ at selected times are shown in Figures~\ref{fig:snapshots u 2} and \ref{fig:snapshots b 2}, respectively. Snapshots of the $z$-component of $J$ are shown in Figure~\ref{fig:snapshots j 2}. Initially, the large-scale vortex drives the magnetic islands into a reconnection phase at around $t=0.14$. Due to the high Hall parameter $\eta$, we observe the formation of small-scale structures and secondary eddies early in time, consistent with Hall--MHD turbulence. Decreasing $\eta$ results in a delay of this phase. This is marked by a quadrupole pattern in the $z$-component of the current density $\bff{J}$ near the centre, signifying region of large gradients. As the simulation progresses, the system exhibits an inverse energy cascade.

Plots of energy vs time and $\max_{\mathcal{T}_h} \abs{\divg \bff{B}}$ vs time for $h=1/50$ and $\tau=0.005$ are shown in Figure~\ref{fig:energy div exp2}, confirming energy stability and divergence-preserving property (up to solver tolerance) of the scheme. Finally, plots of the errors of $\bff{u},\bff{B},\bff{J}$ against $1/h$ to verify spatial orders of convergence at $T=0.05$ are shown in Figures~\ref{fig:conv u 1} and~\ref{fig:conv b j 1}. The observed convergence rates for $\bff{u}$ and $\bff{J}$ are slightly reduced, which may be attributed to limited regularity of the solution. Plots of $\mathcal{E}_0^{\bff{u}}$ and $\mathcal{E}_0^{\bff{B}}$ against $1/\tau$ are displayed in Figure~\ref{fig:temp conv u b 1}.

\begin{figure}[!htb]
	\centering
	\begin{subfigure}[b]{0.22\textwidth}
		\centering
		\includegraphics[width=\textwidth]{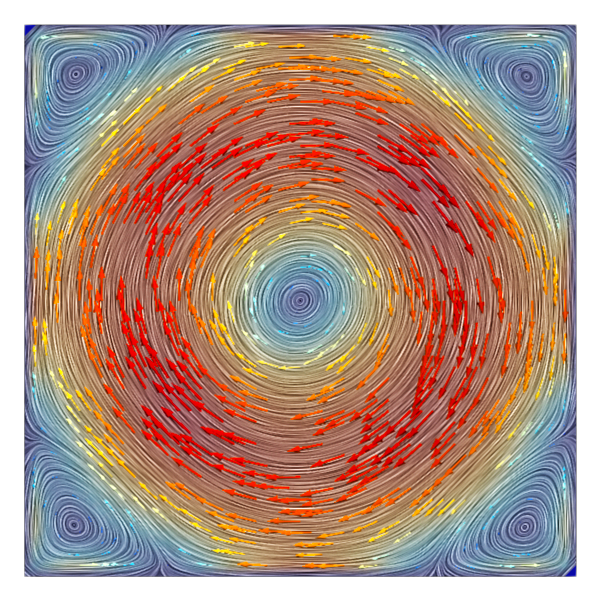}
		\caption{$t=0$}
	\end{subfigure}
	\begin{subfigure}[b]{0.22\textwidth}
		\centering
		\includegraphics[width=\textwidth]{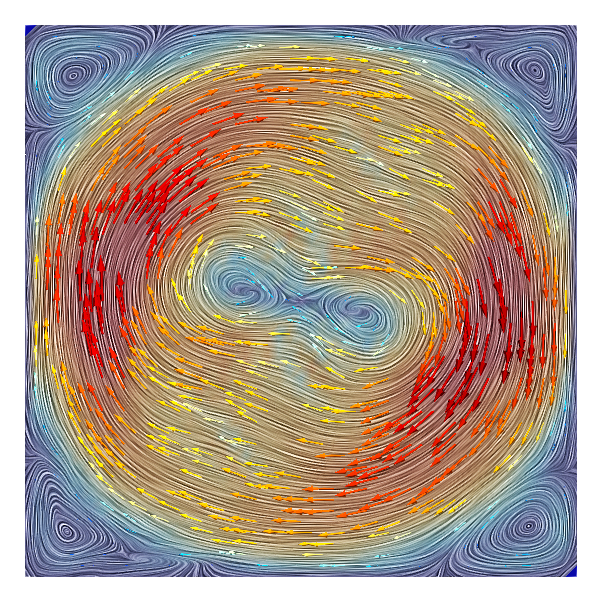}
		\caption{$t=0.1$}
	\end{subfigure}
	\begin{subfigure}[b]{0.22\textwidth}
		\centering
		\includegraphics[width=\textwidth]{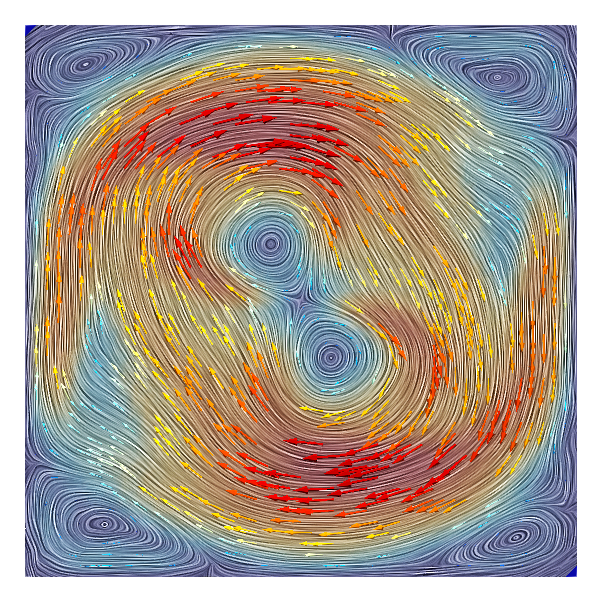}
		\caption{$t=0.2$}
	\end{subfigure}
	\begin{subfigure}[b]{0.22\textwidth}
		\centering
		\includegraphics[width=\textwidth]{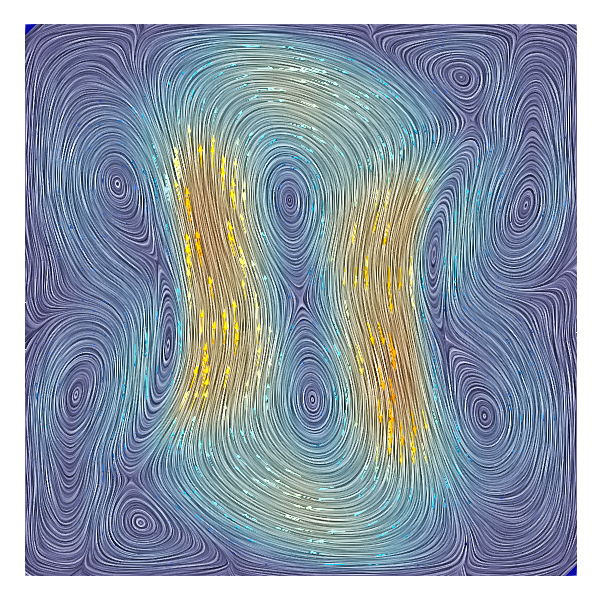}
		\caption{$t=1.0$}
	\end{subfigure}
	\begin{subfigure}[b]{0.08\textwidth}
		\centering
		\includegraphics[width=\textwidth]{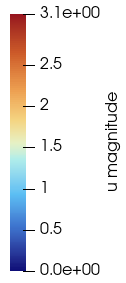}
	\end{subfigure}
	\caption{Streamlines of the fluid velocity $\bff{u}$ in Simulation~\ref{subsec:tang}. Background colour indicates $\abs{\bff{u}}$, and vectors represent $\bff{u}$ with lengths proportional to magnitude.}
	\label{fig:snapshots u 2}
\end{figure}

\begin{figure}[!htb]
	\centering
	\begin{subfigure}[b]{0.22\textwidth}
		\centering
		\includegraphics[width=\textwidth]{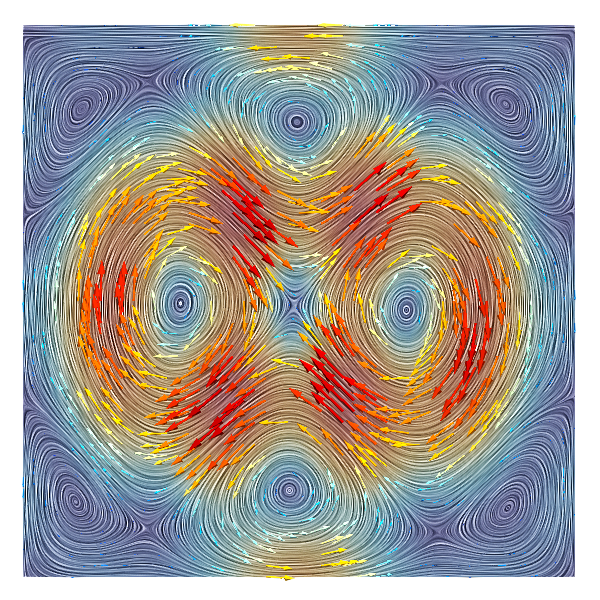}
		\caption{$t=0$}
	\end{subfigure}
	\begin{subfigure}[b]{0.22\textwidth}
		\centering
		\includegraphics[width=\textwidth]{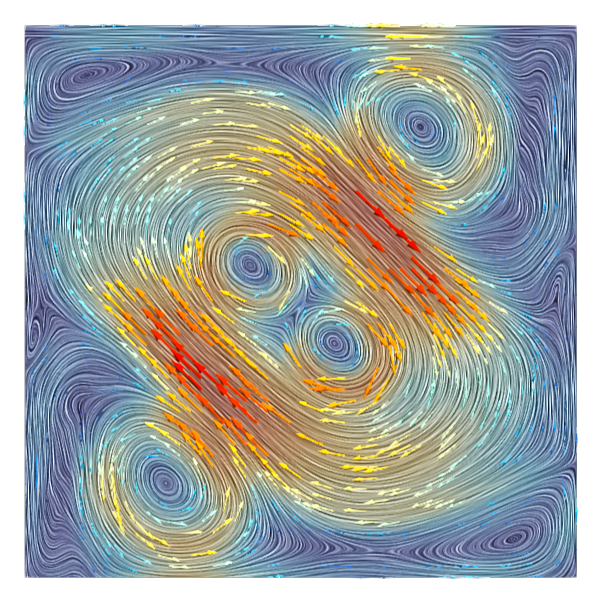}
		\caption{$t=0.1$}
	\end{subfigure}
	\begin{subfigure}[b]{0.22\textwidth}
		\centering
		\includegraphics[width=\textwidth]{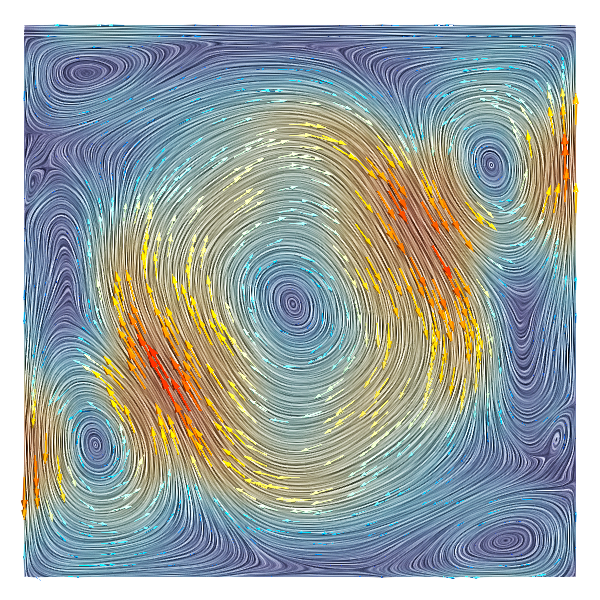}
		\caption{$t=0.2$}
	\end{subfigure}
	\begin{subfigure}[b]{0.22\textwidth}
		\centering
		\includegraphics[width=\textwidth]{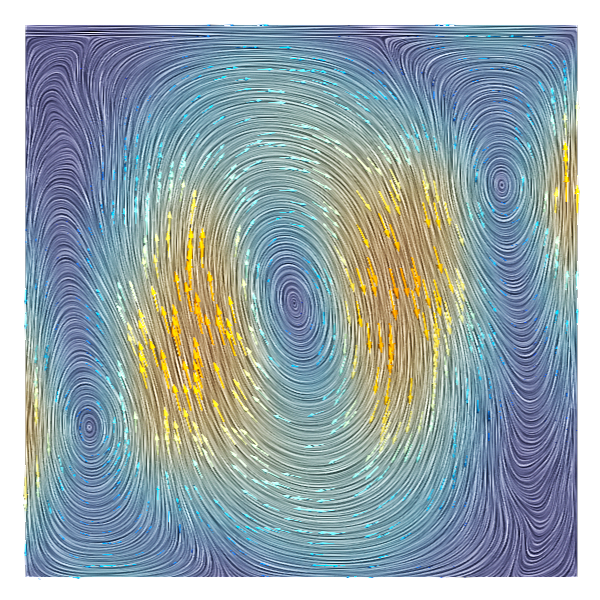}
		\caption{$t=1.0$}
	\end{subfigure}
	\begin{subfigure}[b]{0.08\textwidth}
		\centering
		\includegraphics[width=\textwidth]{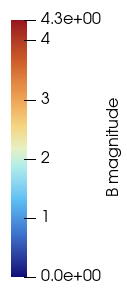}
	\end{subfigure}
	\caption{Streamlines of the magnetic field $\bff{B}$ in Simulation~\ref{subsec:tang}. Background colour indicates $\abs{\bff{B}}$, and vectors represent $\bff{B}$ with lengths proportional to magnitude.}
	\label{fig:snapshots b 2}
\end{figure}

\begin{figure}[!htb]
	\centering
	\begin{subfigure}[b]{0.22\textwidth}
		\centering
		\includegraphics[width=\textwidth]{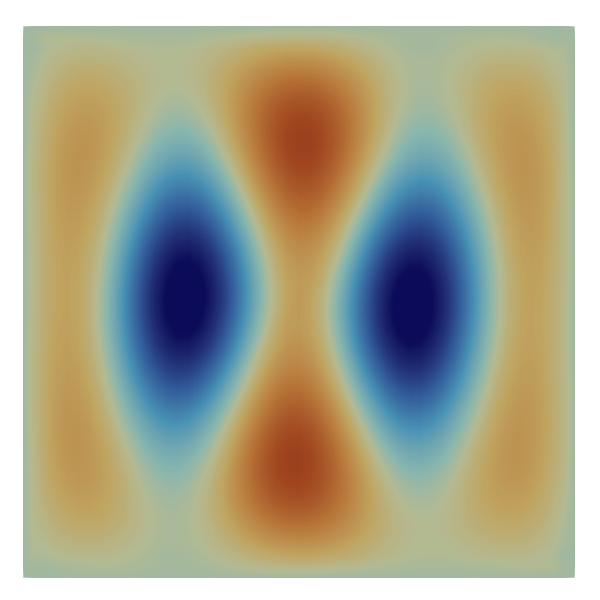}
		\caption{$t=0$}
	\end{subfigure}
	\begin{subfigure}[b]{0.22\textwidth}
		\centering
		\includegraphics[width=\textwidth]{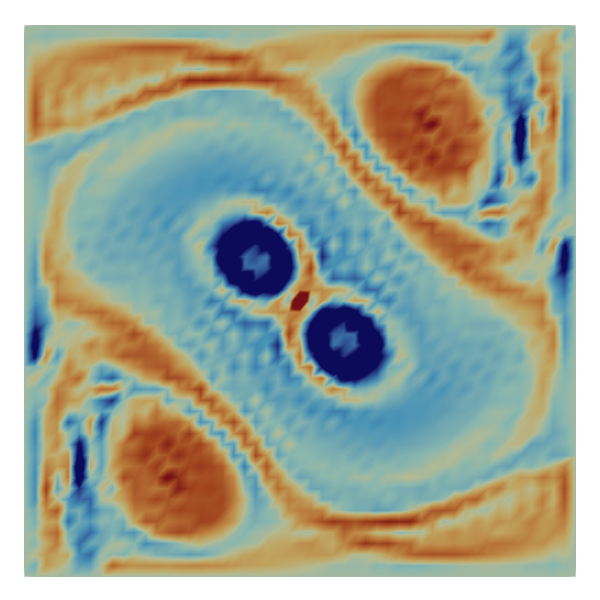}
		\caption{$t=0.1$}
	\end{subfigure}
	\begin{subfigure}[b]{0.22\textwidth}
		\centering
		\includegraphics[width=\textwidth]{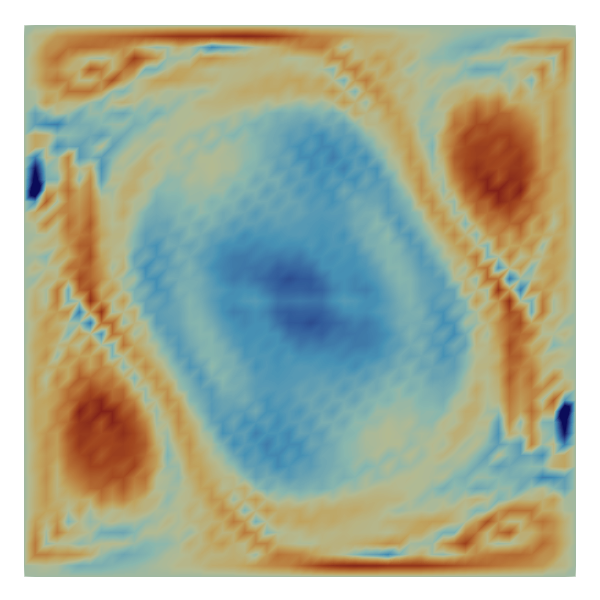}
		\caption{$t=0.2$}
	\end{subfigure}
	\begin{subfigure}[b]{0.22\textwidth}
		\centering
		\includegraphics[width=\textwidth]{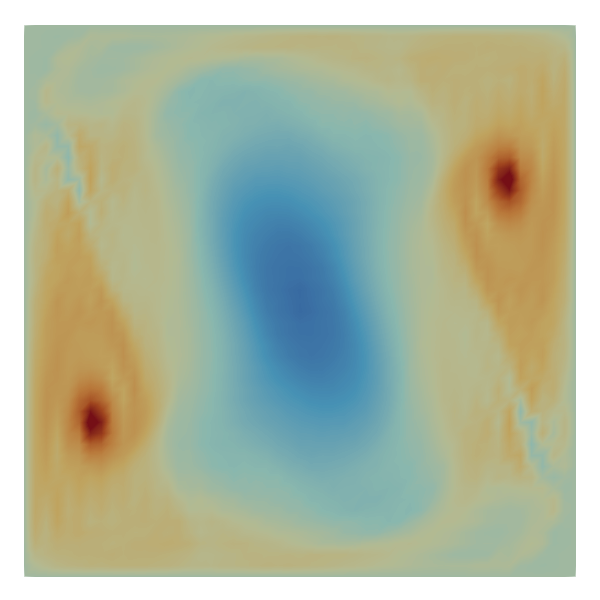}
		\caption{$t=1.0$}
	\end{subfigure}
	\begin{subfigure}[b]{0.08\textwidth}
		\centering
		\includegraphics[width=\textwidth]{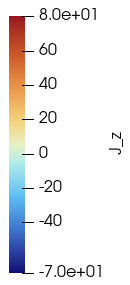}
	\end{subfigure}
	\caption{Snapshots of the $z$-component of $\bff{J}$ in Simulation~\ref{subsec:tang} at given times.}
	\label{fig:snapshots j 2}
\end{figure}

\begin{figure}[!htb]
	\centering
	\begin{subfigure}[b]{0.48\textwidth}
		\centering
		\includegraphics[width=\textwidth]{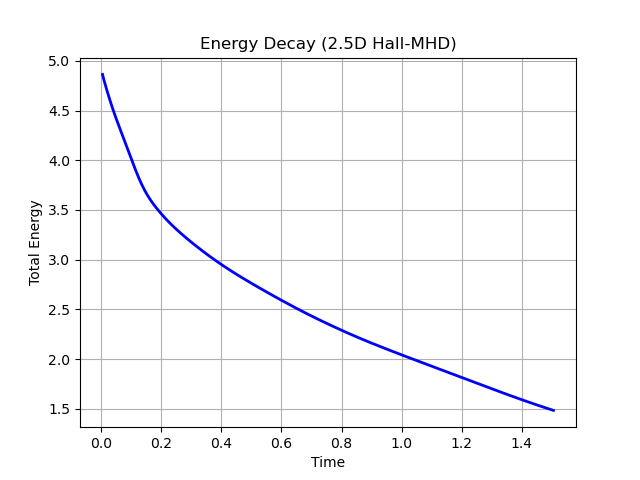}
	\end{subfigure}
	\hspace{1ex}
	\begin{subfigure}[b]{0.48\textwidth}
		\centering
		\includegraphics[width=\textwidth]{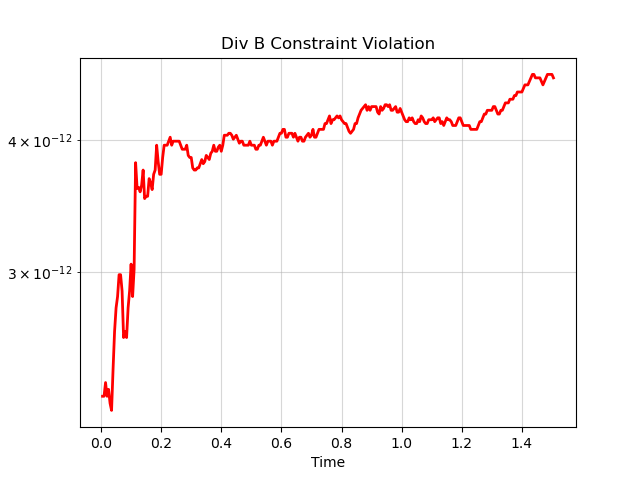}
	\end{subfigure}
	\caption{Energy and $\max_{\mathcal{T}_h} \abs{\divg \bff{B}}$ evolution in Simulation~\ref{subsec:tang} with $h=1/50$ and $\tau=0.005$.}
	\label{fig:energy div exp2}
\end{figure}

\begin{figure}[!htb]
	\begin{subfigure}[b]{0.45\textwidth}
		\centering
		\begin{tikzpicture}
			\begin{axis}[
				title=Plot of $\mathcal{E}_0^{\bff{u}}$ against $1/h$,
				height=0.9\textwidth,
				width=0.9\textwidth,
				xlabel= $1/h$,
				ylabel= $\mathcal{E}_0^{\bff{u}}$,
				xmode=log,
				ymode=log,
				legend pos=south west,
				legend cell align=left,
				]
				\addplot+[mark=*,red] coordinates {(8,0.8383)(16,0.3681)(32,0.1299)(64,0.0418)};
				\addplot+[dashed,no marks,blue,domain=30:64]{2/x};
				\addplot+[dashed,no marks,red,domain=30:64]{260/x^2};
				\legend{\scriptsize{$\mathcal{E}_0^{\bff{u}}(h)$}, \scriptsize{order 1 line}, \scriptsize{order 2 line}}
			\end{axis}
		\end{tikzpicture}
	\end{subfigure}
	\hspace{1ex}
	\begin{subfigure}[b]{0.45\textwidth}
		\centering
		\begin{tikzpicture}
			\begin{axis}[
				title=Plot of $\mathcal{E}_1^{\bff{u}}$ against $1/h$,
				height=0.9\textwidth,
				width=0.9\textwidth,
				xlabel= $1/h$,
				ylabel= $\mathcal{E}_1^{\bff{u}}$,
				xmode=log,
				ymode=log,
				legend pos=south west,
				legend cell align=left,
				]
				\addplot+[mark=*,blue] coordinates {(8,46.16)(16,38.74)(32,26.24)(64,17.0)};
				\addplot+[dashed,no marks,blue,domain=30:64]{700/x};
				\legend{\scriptsize{$\mathcal{E}_1^{\bff{u}}(h)$}, \scriptsize{order 1 line}}
			\end{axis}
		\end{tikzpicture}
	\end{subfigure}
	\caption{Spatial convergence orders of $\bff{u}$ in Simulation \ref{subsec:tang}.}
	\label{fig:conv u 1}
\end{figure}

\begin{figure}[!htb]
	\begin{subfigure}[b]{0.45\textwidth}
		\centering
		\begin{tikzpicture}
			\begin{axis}[
				title=Plot of $\mathcal{E}_0^{\bff{B}}$ against $1/h$,
				height=0.9\textwidth,
				width=0.9\textwidth,
				xlabel= $1/h$,
				ylabel= $\mathcal{E}_0^{\bff{B}}$,
				xmode=log,
				ymode=log,
				legend pos=south west,
				legend cell align=left,
				]
				\addplot+[mark=*,blue] coordinates {(8,1.1421)(16,0.6245)(32,0.3550)(64,0.1882)};
				\addplot+[dashed,no marks,blue,domain=30:64]{16/x};
				\legend{\scriptsize{$\mathcal{E}_0^{\bff{B}}(h)$}, \scriptsize{order 1 line}}
			\end{axis}
		\end{tikzpicture}
	\end{subfigure}
	\hspace{1em}
	\begin{subfigure}[b]{0.45\textwidth}
		\centering
		\begin{tikzpicture}
			\begin{axis}[
				title=Plot of $\mathcal{E}_0^{\bff{J}}$ against $1/h$,
				height=0.9\textwidth,
				width=0.9\textwidth,
				xlabel= $1/h$,
				ylabel= $\mathcal{E}_0^{\bff{J}}$,
				xmode=log,
				ymode=log,
				legend pos=south west,
				legend cell align=left,
				]
				\addplot+[mark=*,blue] coordinates {(8,17.32)(16,15.9)(32,14.9)(64,11.5)};
				\addplot+[dashed,no marks,blue,domain=45:60]{770/x};
				\legend{\scriptsize{$\mathcal{E}_0^{\bff{J}}(h)$}, \scriptsize{order 1 line}}
			\end{axis}
		\end{tikzpicture}
	\end{subfigure}
	\caption{Spatial convergence orders of $\bff{B}$ and $\bff{J}$ in Simulation \ref{subsec:tang}.}
	\label{fig:conv b j 1}
\end{figure}

\begin{figure}[!htb]
	\begin{subfigure}[b]{0.45\textwidth}
		\centering
		\begin{tikzpicture}
			\begin{axis}[
				title=Plot of $\mathcal{E}_0^{\bff{u}}$ against $1/\tau$,
				height=0.9\textwidth,
				width=0.9\textwidth,
				xlabel= $1/\tau$,
				ylabel= $\mathcal{E}_0^{\bff{u}}$,
				xmode=log,
				ymode=log,
				legend pos=south west,
				legend cell align=left,
				]
				\addplot+[mark=*,blue] coordinates {(40,0.5257)(80,0.3467)(160,0.2398)(320,0.1007)};
				\addplot+[dashed,no marks,blue,domain=150:320]{50/x};
				\legend{\scriptsize{$\mathcal{E}_0^{\bff{u}}(\tau)$}, \scriptsize{order 1 line}}
			\end{axis}
		\end{tikzpicture}
	\end{subfigure}
	\hspace{1em}
	\begin{subfigure}[b]{0.45\textwidth}
		\centering
		\begin{tikzpicture}
			\begin{axis}[
				title=Plot of $\mathcal{E}_0^{\bff{B}}$ against $1/\tau$,
				height=0.9\textwidth,
				width=0.9\textwidth,
				xlabel= $1/\tau$,
				ylabel= $\mathcal{E}_0^{\bff{B}}$,
				xmode=log,
				ymode=log,
				legend pos=south west,
				legend cell align=left,
				]
				\addplot+[mark=*,blue] coordinates {(40,1.045)(80,0.7188)(160,0.339)(320,0.222)};
				\addplot+[dashed,no marks,blue,domain=150:320]{85/x};
				\legend{\scriptsize{$\mathcal{E}_0^{\bff{B}}(\tau)$}, \scriptsize{order 1 line}}
			\end{axis}
		\end{tikzpicture}
	\end{subfigure}
	\caption{Temporal convergence orders of $\bff{u}$ and $\bff{B}$ in Simulation \ref{subsec:tang}.}
	\label{fig:temp conv u b 1}
\end{figure}

\subsection{Hall-driven reconnection in a confined modulated Harris sheet}\label{subsec:hall harris}

We consider a confined Harris-type current sheet in a three-dimensional domain $\mathscr{D}=[0,1]^3$, designed to study magnetic reconnection in Hall--MHD under perfectly conducting boundary conditions. The initial fluid velocity is taken to be zero so that the dynamics are driven purely by magnetic forces. The initial magnetic field is constructed from a scalar magnetic potential $A_0$ of the form
\[
A_0(x,y,z)=B_0 \delta \sin(\pi x) \sin(\pi y) \sin(\pi z) \log \cosh\left(\frac{y-0.5}{\delta}\right),
\] 
and we define $\bff{B}_0=\curl (A_0 \hat{\bff{z}})$. By construction, the magnetic field is divergence-free in $\mathscr{D}$ and satisfies $\bff{B}_0\cdot \bff{n}=0$ on $\pa \mathscr{D}$. The parameter $\delta>0$ controls the thickness of the current sheet, while the sinusoidal modulation confines the sheet to the interior of $\mathscr{D}$ and avoids boundary artefacts. The parameters are chosen to balance numerical stability with the ability to resolve Hall-driven reconnection on coarse meshes. In particular, we set $\nu=0.004$, $\sigma=0.008$, $\eta=0.15$, and $\alpha_1=\alpha_2=10^{-5}$. The simulation is performed on a uniform mesh with $h=1/16$ and $\tau=0.01$.

Snapshots of the magnetic field and fluid velocity at representative times are shown in Figure~\ref{fig:snapshots 3 Hall} for $\eta\neq 0$ and in Figure~\ref{fig:snapshots 3 no Hall} for $\eta=0$. As the simulation evolves, the initially planar current sheet thins near the centre of the domain and undergoes magnetic reconnection, leading to the formation of magnetic islands in the plane $z=0.5$. In contrast to resistive (non-Hall) MHD, the Hall term accelerates the reconnection process and generates localised small-scale structures around the reconnection region. In particular, a characteristic quadrupolar pattern in selected components of $\bff{J}$ is observed near the reconnection site, which is a hallmark of Hall--MHD dynamics.

The evolution of the energy and the maximum divergence error $\max_{\mathcal{T}_h} \abs{\divg \bff{B}}$ is shown in Figure~\ref{fig:energy div exp3} . The results confirm that the scheme remains energy-stable and preserves the divergence-free constraint up to solver tolerance, even with relatively coarse spatial and temporal discretisation. This experiment therefore demonstrates the robustness of the proposed scheme in a fully three-dimensional reconnection setting driven by Hall effects.

\begin{figure}[!htb]
	\centering
	\begin{subfigure}[b]{0.45\textwidth}
		\centering
		\includegraphics[width=\textwidth]{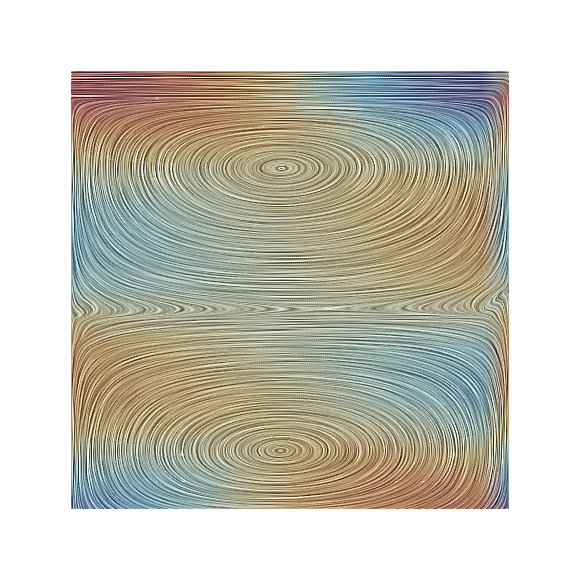}
		\caption{$t=0$}
	\end{subfigure}
	\begin{subfigure}[b]{0.45\textwidth}
		\centering
		\includegraphics[width=\textwidth]{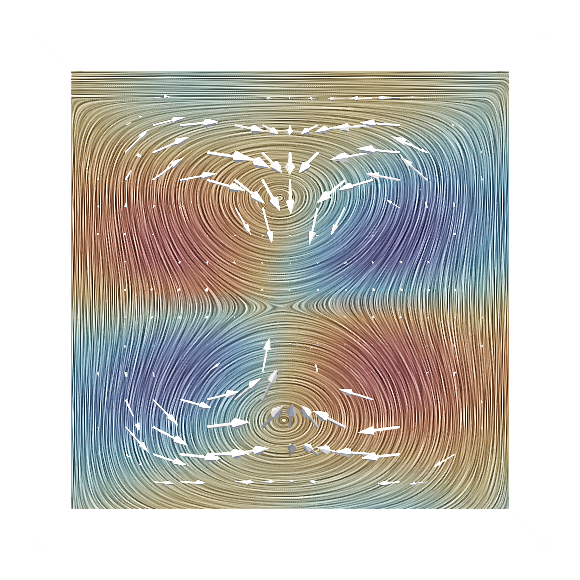}
		\caption{$t=0.1$}
	\end{subfigure}
	\begin{subfigure}[b]{0.07\textwidth}
		\centering
		\includegraphics[width=\textwidth]{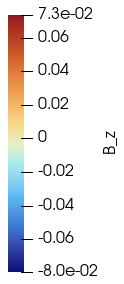}
	\end{subfigure}
	\begin{subfigure}[b]{0.45\textwidth}
		\centering
		\includegraphics[width=\textwidth]{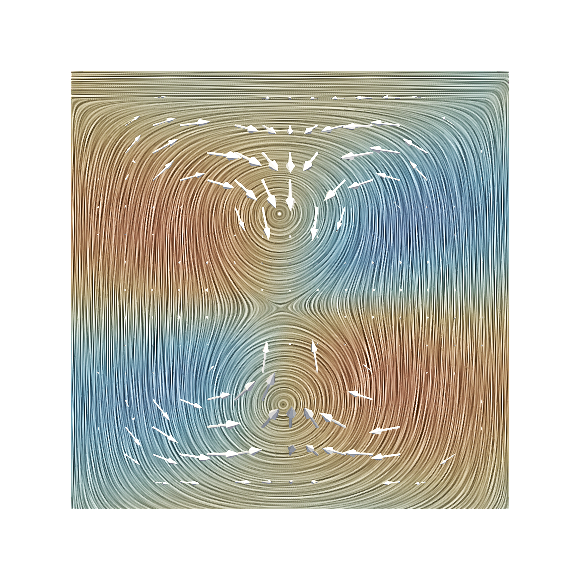}
		\caption{$t=0.15$}
	\end{subfigure}
	\begin{subfigure}[b]{0.45\textwidth}
		\centering
		\includegraphics[width=\textwidth]{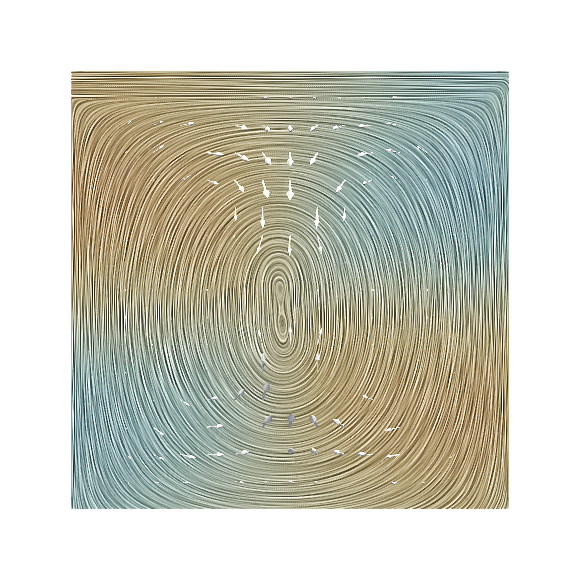}
		\caption{$t=0.25$}
	\end{subfigure}
	\begin{subfigure}[b]{0.07\textwidth}
		\centering
		\includegraphics[width=\textwidth]{hall_exp3_legend.png}
	\end{subfigure}
	\caption{Snapshots of the 3D confined Harris-sheet simulation on the plane $z=0.5$ at selected times with nonzero Hall parameter. Streamlines depict the in-plane magnetic field topology, revealing magnetic islands and X-points. The background colour shows the out-of-plane magnetic field component $B_z$, while arrows indicate the fluid velocity $\bff{u}$. A clear quadrupolar $B_z$ structure develops during island merging, characteristic of Hall-mediated reconnection.}
	\label{fig:snapshots 3 Hall}
\end{figure}

\begin{figure}[!htb]
	\centering
	\begin{subfigure}[b]{0.45\textwidth}
		\centering
		\includegraphics[width=\textwidth]{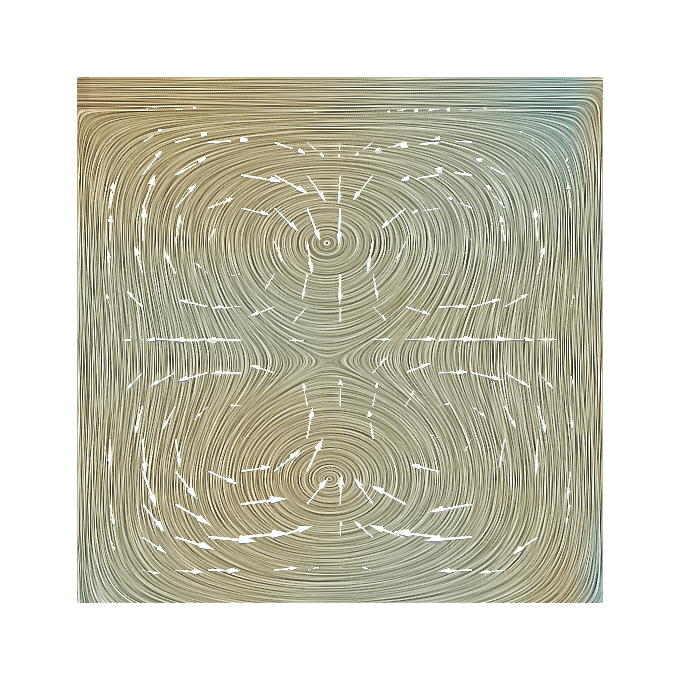}
		\caption{$t=0.15$}
	\end{subfigure}
	\begin{subfigure}[b]{0.45\textwidth}
		\centering
		\includegraphics[width=\textwidth]{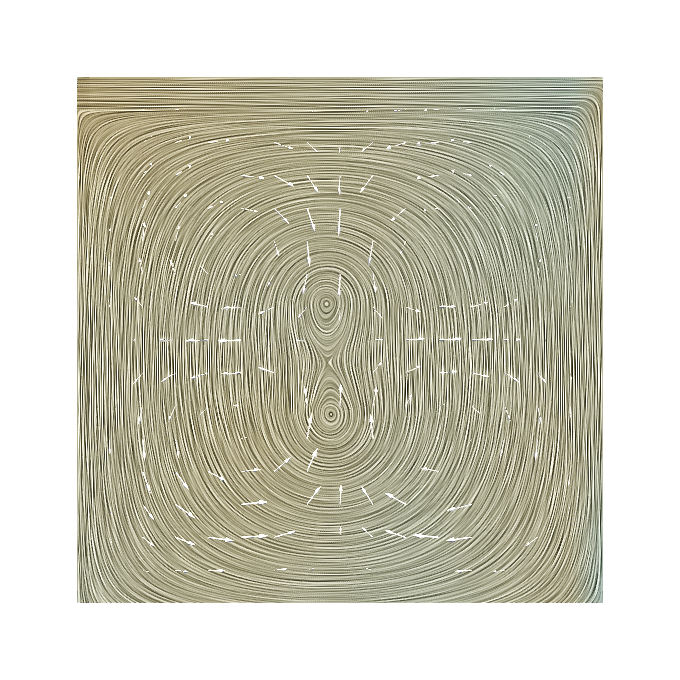}
		\caption{$t=0.25$}
	\end{subfigure}
	\begin{subfigure}[b]{0.07\textwidth}
		\centering
		\includegraphics[width=\textwidth]{hall_exp3_legend.png}
	\end{subfigure}
	\caption{Snapshots of the 3D confined Harris-sheet simulation on the plane~$z=0.5$ at selected times without Hall effects ($\eta=0$). Streamlines depict the in-plane magnetic field topology, showing magnetic islands and X-points. The background colour represents the out-of-plane magnetic field component $B_z$, while arrows indicate the fluid velocity $\bff{u}$. In contrast to the Hall--MHD case, no quadrupolar $B_z$ structure is observed, and magnetic island merging proceeds more slowly.}
	\label{fig:snapshots 3 no Hall}
\end{figure}

\begin{figure}[!htb]
	\centering
	\begin{subfigure}[b]{0.48\textwidth}
		\centering
		\includegraphics[width=\textwidth]{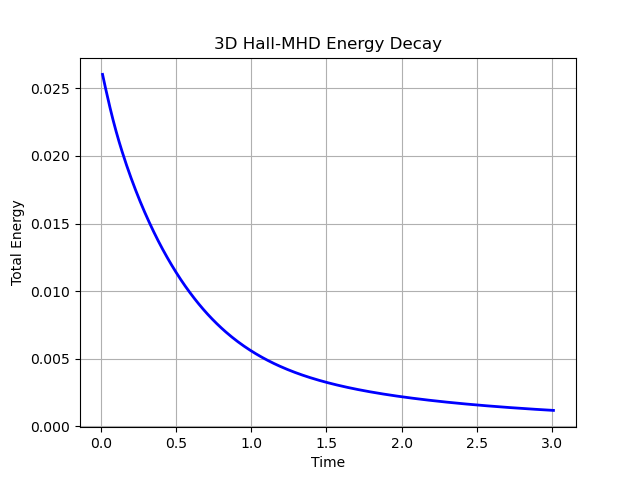}
	\end{subfigure}
	\hspace{1ex}
	\begin{subfigure}[b]{0.48\textwidth}
		\centering
		\includegraphics[width=\textwidth]{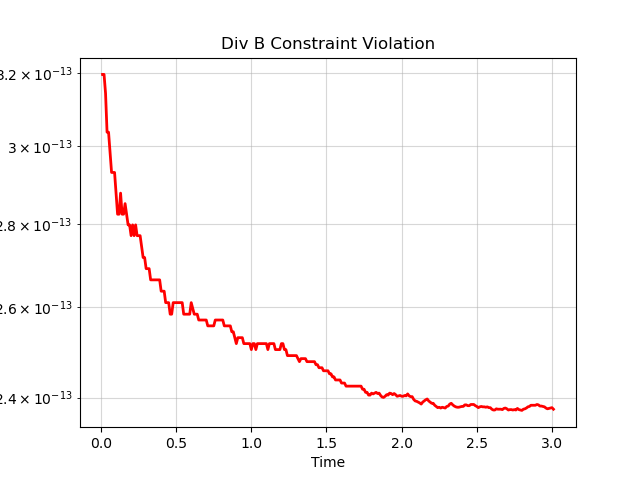}
	\end{subfigure}
	\caption{Energy and $\max_{\mathcal{T}_h} \abs{\divg \bff{B}}$ in Simulation~\ref{subsec:hall harris} with $h=1/16$ and $\tau=0.01$.}
	\label{fig:energy div exp3}
\end{figure}


\end{document}